\def\ls{\lesssim}
\def\gs{\gtrsim}
\def\fz{\infty}
\renewcommand{\r}{\right}
\newcommand{\lf}{\left}
\def\ls{\lesssim}
\def\gs{\gtrsim}
\def\paz{{\partial}}
\def\rr{{\mathbb R}}
\def\rn{{{\rr}^n}}
\def\nn{{\mathbb N}}
\newcommand{\wz}{\widetilde}
\def\az{\alpha}
\def\lz{\lambda}
\def\bz{\beta}
\def\fai{\varphi}
\def\wz{\widetilde}
\def\ls{\lesssim}
\def\gs{\gtrsim}
\def\ol{\overline}
\def\boz{\Omega}
\def\esup{\mathop\mathrm{\,ess\,sup\,}}
\def\hs{\hspace{0.3cm}}
\def\dint{\displaystyle\int}
\def\dfrac{\displaystyle\frac}
\newtheorem{theorem}{Theorem}[section]
\newtheorem{lemma}[theorem]{Lemma}
\newtheorem{proposition}[theorem]{Proposition}
\theoremstyle{definition}
\newtheorem{remark}[theorem]{Remark}
\newtheorem{definition}[theorem]{Definition}
\def\loc{{\mathop\mathrm{loc}}}
\numberwithin{equation}{section}
\begin{document}

\arraycolsep=1pt

\title{\vspace{-2cm}\Large\bf
Gradient Estimates via Rearrangements for Solutions of Some Schr\"odinger Equations
\footnotetext{\hspace{-0.35cm} 2010 {\it Mathematics Subject
Classification}. {Primary 35J10; Secondary 46E30, 35J25, 35D30, 35B45.}
\endgraf{\it Key words and phrases}. Schr\"odinger equation,
Dirichlet problem, Neumann problem, gradient estimate, rearrangement, Lorentz space.
\endgraf  This project is partially supported by the National Natural Science Foundation of China
(Grant Nos.  11401276, 11571289, 11571039 and 11271175) and
the Specialized Research Fund for the Doctoral Program of Higher Education
of China (Grant No. 20120003110003).
Der-Chen Chang is partially supported by an NSF grant DMS-1408839 and a McDevitt Endowment
Fund at Georgetown University. }}
\author{Sibei Yang, Der-Chen Chang, Dachun Yang\,\footnote{Corresponding author}\,\, and Zunwei Fu }
\date{ }
\maketitle

\vspace{-0.8cm}

\begin{center}
\begin{minipage}{13.5cm}\small
{{\bf Abstract.}
In this article, by applying the well known method
for dealing with $p$-Laplace type elliptic boundary value problems,
the authors establish a sharp estimate for the decreasing rearrangement of
the gradient of solutions to the Dirichlet and the Neumann boundary value
problems of a class of Schr\"odinger equations,
under the weak regularity assumption on the boundary
of domains. As applications, gradient estimates of these solutions
in Lebesgue spaces and Lorentz spaces are obtained.}
\end{minipage}
\end{center}

\section{Introduction\label{s1}}

\hskip\parindent  It is well known that the global regularity of solutions is a classic and
interesting topic in the theory of elliptic boundary value problems. In particular,
the global estimates for the gradient of solutions to (non-)linear
elliptic boundary value problems in various function spaces
have attracted great interests for a long time; see, for example,
\cite{c92,m61,m69a,m69,m09a,m09b,s63,t76} for the linear case
and \cite{acmm10,aft00,c97,cm11,cm14a,cm14b,cm15,d83,t79} for the non-linear case.

In this article, motivated by the work in \cite{acmm10,am08,bbggpv,bg89,cm14b},
via applying the well known method for estimating the gradient
of solutions to $p$-Laplace type elliptic boundary value problems
and some estimates established by Shen \cite{sh94,sh95} for the fundamental solution of
Schr\"odinger equations,
we obtain a sharp estimate for the decreasing rearrangement of the gradient of solutions to
a class of Schr\"odinger equations with the Dirichlet or the Neumann boundary condition,
under the weak assumption for the regularity on the boundary of domains.
As applications, we further establish the gradient estimates
of solutions to these Schr\"odinger equations in Lebesgue spaces and Lorentz spaces.

To state the Schr\"odinger equations considered in this article,
we first recall the definition of the reverse H\"older class
(see, for example, \cite{g14,sh95}).
Recall that a non-negative function $w$ on $\rn$ is said to belong to the \emph{reverse
H\"older class} $RH_{q}(\mathbb{R}^n)$
with $q\in(1,\fz]$, denoted by $w\in RH_q(\rn)$, if, when
$q\in(1,\fz)$, $w\in L^q_{\loc}(\rn)$ and
\begin{align*}
[w]_{RH_{q}(\mathbb{R}^n)}:=\sup_{B\subset\rn}\lf\{\frac{1}{|B|}\int_{B}[w(x)]^q\,dx\r\}^{1/q}\lf\{
\frac{1}{|B|}\int_{B}w(x)\,dx\r\}^{-1}<\fz
\end{align*}
or, when $q=\fz$, $w\in L^\fz_{\loc}(\rn)$ and
\begin{align*}
[w]_{RH_{\fz}(\mathbb{R}^n)}:=\sup_{B\subset\rn}\lf\{\esup_{x\in B}w(x)\r\}
\lf\{\frac{1}{|B|}\int_{B}w(x)\,dx\r\}^{-1}<\fz,
\end{align*}
where the suprema are taken over all balls $B\subset\rn$.

It is well known that, for any $1<q\le p\le\fz$, $RH_p(\rn)\subset RH_q(\rn)$
(see, for example, \cite{g14}).
A typical example of the reverse H\"older class
is that, for any $x\in\rn$, $w(x):=|P(x)|^\az$,
where $P$ is a polynomial on $\rn$ and $\az\in(0,\fz)$,
which turns out to be in $RH_\fz(\rn)$
(see, for example, \cite{sh94,sh95}).
Moreover, for any $x\in\rn$, let $w(x):=|x|^{-1+\epsilon}$ with $\epsilon\in(0,1)$.
Then $w\in RH_n(\rn)$ (see, for example, \cite[Section 7]{sh95}).

Let $n\ge3$ and $\boz$ be a bounded domain in $\rn$.
Denote by $W^{1,2}(\boz)$ and $W^{1,2}_0(\boz)$ the classical \emph{Sobolev space}
on $\boz$, respectively, the closure of $C^{\fz}_c (\boz)$ in
$W^{1,2}(\boz)$, where $C^{\fz}_c (\boz)$ denotes the \emph{set of
all $C^\fz$ functions on $\rn$ with compact support contained in $\boz$}.
Assume that $0\le V\in RH_{q}(\rn)$ for some $q\in[n,\fz]$ and $V\not\equiv0$ on $\boz$.
Let $f\in L^2(\boz)$. Then a function $u\in W^{1,2}_0(\boz)$
is called a \emph{weak solution} to the Dirichlet problem
\begin{align}\label{1.1}
\lf\{\begin{array}{l}
-\Delta u+Vu=f\ \ \text{in}\ \boz,\\
u=0\ \ \ \ \ \ \ \ \ \ \ \ \ \ \ \text{on}\  \partial\boz,
\end{array}\r.
\end{align}
where $\partial\boz$ denotes the boundary of $\boz$,
if, for any $v\in W^{1,2}_0(\boz)$,
\begin{align}\label{1.2}
\int_\boz\nabla u(x)\cdot\nabla v(x)\,dx+\int_\boz V(x)u(x)v(x)\,dx=\int_\boz f(x)v(x)\,dx.
\end{align}

Assume further that $\boz$ is a bounded Lipschitz domain. Then a function $u\in W^{1,2}(\boz)$
is called a \emph{weak solution} of the Neumann problem
\begin{align}\label{1.3}
\lf\{\begin{array}{l}
-\Delta u+Vu=f\ \ \text{in}\ \boz,\\
\dfrac{\partial u}{\partial\nu}=0\ \ \ \ \ \ \ \ \ \ \ \ \ \text{on}\  \partial\boz
\end{array}\r.
\end{align}
if, for any $v\in W^{1,2}(\boz)$, \eqref{1.2} holds true.
Here and hereafter, $\nu:=(\nu_1,\,\ldots,\,\nu_n)$ denotes the \emph{outward unit normal}
to $\partial\boz$.

\begin{remark}\label{r1.1}
{\rm(i)} Let $f\in L^2(\boz)$, $0\le V\in RH_n(\rn)$ and $V\not\equiv0$ on $\boz$.
Then, by the Lax-Milgram theorem,
we know that the Dirichlet problem \eqref{1.1} and the Neumann  problem \eqref{1.3}
have the unique weak solution (see, for example, \cite[Chapter 8]{gt}).

{\rm(ii)} Assume that $V\equiv0$ in \eqref{1.1} and \eqref{1.3}, and
$\int_\boz f(x)\,dx=0$ in \eqref{1.3}.
Then, in this case, \eqref{1.1} and \eqref{1.3} are just the Dirichlet problem,
respectively, the Neumann problem of the Laplace equation in $\boz$.
\end{remark}

However, when $f\in L^1(\boz)$, the definition of the weak solution
to \eqref{1.1} or \eqref{1.3} as the way in \eqref{1.2} may be meaningless.
In this case, the generalized solutions for
\eqref{1.1} and \eqref{1.3} can be defined by an approximating method
(see Section \ref{s2} below for the details). We point out that the study for
the theory of (non-)linear elliptic boundary value problems with $L^1$-data
has aroused the attention of many mathematicians for quite some time
(see, for example, \cite{acmm10,aft00,am08,bbggpv,bg89,bs73,d96,dm97,s63}).

To state the main result of this article,
we first recall the definitions of the distribution function and the decreasing rearrangement as follows.
Let $\boz\subset\rn$ be an open bounded set and $u$ a real-valued measurable function on $\boz$.
Then the \emph{distribution function} $\mu_u:\ [0,\fz)\to[0,|\boz|]$ of $u$
is defined by setting, for any $t\in[0,\fz)$,
\begin{align*}
\mu_u(t):=|\{x\in\boz:\ |u(x)|>t\}|.
\end{align*}
The \emph{decreasing rearrangement} $u^\ast:\ [0,\fz)\to[0,\fz]$
of $u$ is defined by setting, for any $s\in[0,\fz)$,
\begin{align*}
u^\ast(s):=\sup\{t\in[0,\fz):\ \mu_u(t)>s\}.
\end{align*}
We point out that $u^\ast$ is the unique right-continuous decreasing function in $[0,\fz)$
equivalently distributed  with $u$ and, if $s\in[|\boz|,\fz)$, then $u^\ast(s)=0$,
where $|\boz|$ denotes the Lebesgue measure of $\boz$ (see, for example,
\cite[Chapter 7]{pkjf13}).

Then the main result of this article is as follows and
some necessary notions are recalled after this theorem.

\begin{theorem}\label{t1.1}
Let $n\ge3$, $\boz$ be a bounded domain in $\rn$,
$0\le V\in RH_n(\rn)$ and $V\not\equiv0$ on $\boz$.
Assume that $\partial\boz\in W^2L^{n-1,1}$ or $\boz$ is semi-convex,
$f\in L^1(\boz)$ and $u$ is the unique weak solution to the Dirichlet
problem \eqref{1.1} or the Neumann problem \eqref{1.3}.
Then there exists a positive constant $C$, depending on $n$, $[V]_{RH_n(\rn)}$
and $\boz$, such that, for any $s\in(0,|\boz|)$,
\begin{align}\label{1.4}
|\nabla u|^\ast(s)\le C\lf[s^{-\frac1{n'}}\int_0^sf^{\ast}(r)\,dr
+\int_s^{|\boz|}f^{\ast}(r)r^{-\frac1{n'}}\,dr\r],
\end{align}
where $n':=n/(n-1)$, $|\nabla u|^\ast$ and $f^\ast$ denote the
decreasing rearrangements of $|\nabla u|$, respectively, $f$.
\end{theorem}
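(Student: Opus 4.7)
The plan is threefold: reduce to bounded data by truncation, express $\nabla u$ pointwise through the Green's function of $-\Delta+V$ using Shen's fundamental-solution estimates, and then apply the O'Neil--Hunt rearrangement inequality for Riesz kernels of order one.

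First I would approximate $f$ by $f_k:=\mathrm{sign}(f)\min\{|f|,k\}$, so that $f_k\in L^\fz(\boz)\subset L^2(\boz)$, $f_k\to f$ in $L^1(\boz)$, and $f_k^\ast\uparrow f^\ast$ pointwise. Let $u_k$ denote the unique weak solution of the Dirichlet (resp.\ Neumann) problem with right-hand side $f_k$, which exists by Remark \ref{r1.1}(i). The definition of the generalized solution $u$ (to be recalled in Section \ref{s2}) ensures $u_k\to u$ in $W^{1,1}(\boz)$, so that $|\nabla u_k|\to|\nabla u|$ in measure and
\[
|\nabla u|^\ast(s)\le\liminf_{k\to\fz}|\nabla u_k|^\ast(s)\quad\text{for a.e.\ }s\in(0,|\boz|).
\]
It therefore suffices to establish \eqref{1.4} for each $u_k$ with a constant independent of $k$, after which monotone convergence yields the full statement.

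Next, I would represent
\[
u_k(x)=\int_\boz G_V(x,y)\,f_k(y)\,dy,\qquad x\in\boz,
\]
with $G_V$ the Green's function of $-\Delta+V$ in $\boz$ under the appropriate boundary condition. The crucial analytic input is the pointwise kernel bound
\[
|\nabla_xG_V(x,y)|\le\frac{C}{|x-y|^{n-1}},\qquad x,y\in\boz,\ x\ne y,
\]
with $C$ depending only on $n$, $[V]_{RH_n(\rn)}$ and $\boz$. Away from $\partial\boz$ this is precisely the gradient estimate of Shen \cite{sh94,sh95} for the free fundamental solution $\Gamma_V$. Up to the boundary one writes $G_V=\Gamma_V-h_x$ with $h_x$ a boundary corrector and controls $|\nabla h_x|$ using the weak regularity hypothesis: for $\partial\boz\in W^2L^{n-1,1}$ one adapts the Cianchi--Maz'ya framework from \cite{cm14b,cm15}, and for semi-convex $\boz$ one exploits the sign of the second fundamental form.

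Once the kernel estimate is in hand, the pointwise inequality
\[
|\nabla u_k(x)|\le C\int_\boz\frac{|f_k(y)|}{|x-y|^{n-1}}\,dy=:C\,I_1(|f_k|)(x)
\]
follows, where $I_1$ denotes the Riesz potential of order one restricted to $\boz$. Since the decreasing rearrangement of $y\mapsto|x-y|^{-(n-1)}$ on $\boz$ is comparable to $r\mapsto r^{-1/n'}$, the classical O'Neil--Hunt inequality yields
\[
(I_1(|f_k|))^\ast(s)\le C\lf[s^{-\frac1{n'}}\int_0^s f_k^\ast(r)\,dr+\int_s^{|\boz|}f_k^\ast(r)\,r^{-\frac1{n'}}\,dr\r]
\]
for every $s\in(0,|\boz|)$, and letting $k\to\fz$ produces \eqref{1.4}.

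The principal technical obstacle lies in the boundary gradient estimate for $G_V$. The rough-boundary assumptions here preclude a direct appeal to classical Schauder theory, so $h_x$ must be analyzed via layer-potential or reflection arguments tailored to $W^2L^{n-1,1}$ or semi-convex boundaries. For the Neumann problem there is the additional difficulty that no pointwise comparison $G_V^N\le\Gamma_V$ is available; here the hypothesis $V\not\equiv 0$ secures coercivity of $-\Delta+V$ with Neumann data and is used to construct a Neumann Green's function satisfying the same pointwise gradient bound.
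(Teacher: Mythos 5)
There is a genuine gap, and it sits exactly where you park it: the claimed pointwise bound $|\nabla_x G_V(x,y)|\le C|x-y|^{-(n-1)}$ \emph{uniformly up to the boundary} under the hypotheses $\partial\boz\in W^2L^{n-1,1}$ or $\boz$ semi-convex. Shen's estimates give this only for the fundamental solution in $\rn$ (equivalently, in the interior), and neither of the two mechanisms you invoke for the boundary corrector delivers a pointwise kernel estimate: the Cianchi--Maz'ya framework in \cite{cm14b,cm15} produces \emph{norm} estimates by level-set/test-function and rearrangement arguments precisely because pointwise Green-function gradient bounds are not available at this low boundary regularity ($W^2L^{n-1,1}$ implies $C^1$ but no Dini-type control of the normal, which is what classical Green-function gradient estimates require; semi-convex domains are merely Lipschitz with a one-sided curvature condition, and for the Neumann problem with a potential no such bound is in the literature at all). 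If that kernel inequality were known, the O'Neil--Hunt step would indeed finish the proof instantly, but as written the entire content of the theorem has been deferred to an unproven estimate that is plausibly false in this generality and is certainly not obtainable by ``adapting'' the cited works. The approximation step is comparatively harmless (the paper's Proposition \ref{p2.1} gives a.e.\ convergence of gradients along a subsequence, which is all one needs), but it cannot compensate for the missing kernel bound.

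For contrast, the paper never touches the Green function near $\partial\boz$: it proves two endpoint \emph{operator} estimates for $T\colon f\mapsto\nabla u$, namely $\|\nabla u\|_{L^\fz(\boz)}\ls\|f\|_{L^{n,1}(\boz)}$ (Proposition \ref{p3.1}, taken from \cite{y15}) and the weak-type bound $\|\nabla u-\nabla v\|_{L^{n',\fz}(\boz)}\ls\|f-g\|_{L^1(\boz)}$ (Proposition \ref{p3.2}, proved via Maz'ya's differential inequality for distribution functions, a truncation/test-function argument, and the $L^1$-boundedness of $VL_\boz^{-1}$ from Lemma \ref{l3.3}), and then deduces \eqref{1.4} by computing the $K$-functionals for the couples $(L^{n',\fz},L^\fz)$ and $(L^1,L^{n,1})$ via Holmstedt's formulas. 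If you want to salvage your route, you would have to either prove the boundary gradient estimate for $G_V$ under these hypotheses (a substantial result in its own right) or retreat to the endpoint-norm-plus-interpolation scheme the paper actually uses.
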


\begin{remark}\label{r1.2}
{\rm(i)} We point out that the estimate \eqref{1.4} is sharp
in some sense. Here we give an example to explain this.

Let $n:=3$ and $B_0:=B(0,1)$ be the ball in $\rr^3$ with the center $0$ and the radius $1$.
For any $x:=(x_1,x_2,x_3)\in B_0$, let
$u_0(x):=|x|-1$ and
\begin{align*}
f_0(x):=\begin{cases}
-\dfrac{2}{|x|}+|x|-1\  &\text{when}\ x\neq0,\\
0\  \ \quad\quad\quad\quad\quad &\text{when}\ x=0.
\end{cases}
\end{align*}
Then, by a simple calculus, we find that
$u_0\in W^{1,2}_0(B_0)$ has the weak gradient that,
for any $x\in B_0$,
\begin{align*}
\nabla u_0(x)=\begin{cases}
\lf(\dfrac{x_1}{|x|}, \dfrac{x_2}{|x|}, \dfrac{x_3}{|x|}\r)\  &\text{when}\ x\neq0,\\
0\  \ \ \ \ \ \ \ \ \ \ \ \  \ &\text{when}\ x=0,
\end{cases}
\end{align*}
and $f_0\in L^2(B_0)$. Moreover, it is easy to see that,
for any $x\in B_0\setminus\{0\}$, $|\nabla u_0(x)|=1$,
which, together with the definition of $|\nabla u_0|^\ast$, implies that,
for any $s\in(0,|B_0|)$, $|\nabla u_0|^\ast(s)=1$.
Furthermore, from the definitions of $u_0$ and $f_0$,
it follows that $u_0$ is the weak solution of
the Dirichlet problem
\begin{align*}
\begin{cases}
-\Delta u+u=f_0\ \ &\text{in}\ B_0,\\
u=0\ \ \ \ \ \ \quad\quad\ &\text{on}\  \partial B_0.
\end{cases}
\end{align*}
Using calculus, we know that, for any $t\in(0,\fz)$,
\begin{align*}
\mu_{f_0}(t)=\begin{cases}
|B_0|\  \hspace{4cm} &\text{when}\ t\in(0,2],\\
\lf(\dfrac{1-t+[(t-1)^2+8]^{1/2}}{2}\r)^{3}|B_0|\  \ \ \ \ \ &\text{when}\ t\in(2,\fz),
\end{cases}
\end{align*}
which further implies that, for any $s\in(0,|B_0|)$,
$$f_0^\ast(s)=2\lf(\frac{s}{|B_0|}\r)^{-\frac13}-\lf(\frac{s}{|B_0|}\r)^{\frac13}+1.
$$
From this and $n=3$, it follows that, for any $s\in(0,|B_0|)$,
\begin{align*}
&s^{-\frac{1}{n'}}\int_0^s f_0^\ast(r)\,dr+\int_s^{|B_0|}f_0^\ast(r)r^{-\frac{1}{n'}}\,dr\\
&\hs=\frac{9}{2}|B_0|^{\frac13}+2|B_0|^{\frac13}\log\lf(\frac {|B_0|}s\r)+\frac{3}{4}
|B_0|^{-\frac13}s^{\frac23}-2s^{\frac13},
\end{align*}
which, combined with \eqref{1.4} and the fact that, for any $s\in(0,|B_0|)$, $|\nabla u_0|^\ast(s)=1$,
further implies that there exists a positive constant $C$, depending on $|B_0|$, such that,
for any  $s\in(|B_0|/2,|B_0|)$,
$$C^{-1}|\nabla u_0|^\ast(s)\le
s^{-\frac{1}{n'}}\int_0^s f_0^\ast(r)\,dr+\int_s^{|B_0|}f_0^\ast(r)r^{-\frac{1}{n'}}\,dr
\le C|\nabla u_0|^\ast(s).
$$
Thus, the estimate \eqref{1.4} is sharp in some sense.

{\rm(ii)} We point out that we obtain \eqref{1.4} in Theorem \ref{t1.1} inspired
by the proof of \cite[Theorem 1.1]{cm14b}.

{\rm(iii)} Assume that $\boz$, $u$ and $f$ are as in Theorem \ref{t1.1}.
We point out that the right-hand side of \eqref{1.4} can be naturally expressed via
the Calder\'on operator. Indeed,
let $1\le p_0<p_1\le\fz$, $q_0,\,q_1\in[1,\fz]$, $q_0\neq q_1$
and $\az:=(1/q_0-1/q_1)/(1/p_0-1/p_1)$.
Recall that the \emph{Calder\'on operator} $S^{p_0,\,p_1}_{q_0,\,q_1}$ is defined by setting,
for any measurable function $g$ on $(0,\fz)$ and $t\in(0,\fz)$,
$$S^{p_0,\,p_1}_{q_0,\,q_1}(g)(t):=t^{-\frac{1}{q_0}}\int_0^{t^\az}s^{\frac1{p_0}}g(s)\,\frac{ds}{s}
+t^{-\frac{1}{q_1}}\int_{t^\az}^{\fz}s^{\frac1{p_1}}g(s)\,\frac{ds}{s},
$$
which plays a key role in the theory of rearrangement invariant spaces (see, for example,
\cite{bs88}). By \eqref{1.4} and the fact that, for any $s\in[|\boz|,\fz)$,
$|\nabla u|^\ast(s)=0=f^\ast(s)$, we find that, for any $s\in(0,\fz)$,
$$|\nabla u|^\ast(s)\le CS^{1,\,n}_{n',\,\fz}(f^\ast)(s)
$$
with $C$ same as in \eqref{1.4}.

Moreover, for any $s\in(0,\fz)$, let $f^{\ast\ast}(s):=(1/s)\int_0^s f^\ast(t)\,dt$.
Then, by using \eqref{1.4}, we can also obtain another control of $|\nabla u|^\ast$
via $f^{\ast\ast}$. Indeed, it is easy to see that, for any $s\in(0,\fz)$, $f^\ast(s)\le f^{\ast\ast}(s)$ and
\begin{align*}
\frac{d}{ds}f^{\ast\ast}(s)=\frac{1}{s}[f^\ast(s)-f^{\ast\ast}(s)],
\end{align*}
which, together with \eqref{1.4}, integration by parts and the fact that
$\|f\|_{L^1(\boz)}=\int_{0}^{|\boz|}f^\ast(r)\,dr$,
implies that, for any $s\in(0,|\boz|)$,
\begin{align*}
|\nabla u|^\ast(s)&\le C\lf\{s^{\frac1{n}}f^{\ast\ast}(s)+\int_s^{|\boz|}
\lf[r\frac{d}{dr}f^{\ast\ast}(r)+f^{\ast\ast}(r)\r]r^{-\frac1{n'}}\,dr\r\}\\
&\le C\lf\{s^{\frac1{n}}f^{\ast\ast}(s)+f^{\ast\ast}(r)r^{\frac 1n}\Big|^{|\boz|}_s
+\frac{1}{n'}\int_s^{|\boz|}f^{\ast\ast}(r)r^{-\frac1{n'}}\,dr\r\}\\
&\le C\lf[|\boz|^{-\frac{1}{n'}}\|f\|_{L^1(\boz)}+\frac{1}{n'}
\int_s^{|\boz|}f^{\ast\ast}(r)r^{-\frac1{n'}}\,dr\r],
\end{align*}
where $C$ is same as in \eqref{1.4}.

{\rm(iv)} As was pointed out in \cite[p.\,573]{cm14b}, the particular
feature of \eqref{1.4} is its independence of concrete function spaces.
Thus, it is agile enough to obtain the estimates of $|\nabla u|$
in various rearrangement invariant function spaces via \eqref{1.4}
(see Theorem \ref{t1.2} below).

{\rm(v)} $\partial\boz\in W^2L^{n-1,1}$ means that $\boz$ is locally
a subgraph of a function of $n-1$ variables whose second order
derivatives belong to the Lorentz space $L^{n-1,1}$. It is worth pointing out that
$\partial\boz\in W^2L^{n-1,1}$ is the weakest possible integrability condition
on second order derivatives guaranteeing the first order derivatives to be continuous, which further implies
$\partial\boz\in C^{1}$ (see, for example, \cite{cp98}).
\end{remark}

We prove Theorem \ref{t1.1} by following the method in \cite{cm14b}
to estimate the rearrangement of the gradient of solutions to $p$-Laplace
type elliptic boundary value problems.
More precisely, by the boundedness of the gradient of solutions to \eqref{1.1} and \eqref{1.3}
with $f\in L^{n,1}(\boz)$, obtained in \cite{y15} (see also Proposition \ref{p3.1} below),
the $L^{n/(n-1),\fz}(\boz)$-estimates for the gradient of solutions to
\eqref{1.1} and \eqref{1.3} with $f\in L^1(\boz)$ (see Proposition \ref{p3.2} below) and the method of
the $K$-functional, we show Theorem \ref{t1.1}. Here and hereafter, 
$L^{p,q}(\boz)$, with $p\in(0,\fz]$ and $q\in(0,\fz]$,
denotes the Lorentz space on $\boz$.

Recall that, for the Dirichlet problem or the Neumann problem of the Laplace equation,
the following conclusion was obtained in \cite[Theorems 1.1 and 1.2]{cm14b}
(see also \cite[Theorem 3.2]{cm15}).

\setcounter{theorem}{0}
\renewcommand{\thetheorem}{\arabic{section}.\Alph{theorem}}

\begin{theorem}\label{thm-a}
Let $n\ge3$, $\boz$ be a bounded domain in $\rn$ and $f\in L^1(\boz)$.
Assume that $\partial\boz\in W^2L^{n-1,1}$ or $\boz$ is convex,
and $u$ is the unique weak solution to the Dirichlet problem
or the Neumann problem of the Laplace equation in $\boz$.
Then there exists a positive constant $C$, depending on $n$
and $\boz$, such that, for any $s\in(0,|\boz|)$,
\begin{align}\label{1.5}
|\nabla u|^\ast(s)\le C\int_s^{|\boz|}f^{\ast\ast}(r)r^{-\frac1{n'}}\,dr.
\end{align}
\end{theorem}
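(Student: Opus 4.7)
The plan is to implement the real-interpolation / K-functional scheme advertised in the paragraph just before Theorem \ref{t1.1}, but now for the Laplacian (the $V\equiv0$ case). Two endpoint mapping properties of the solution map $T:f\mapsto\nabla u$ are proved independently, and then a pointwise rearrangement inequality is extracted by decomposing $f$ at an optimal height.

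The first endpoint is the sharp bound
$$\|\nabla u\|_{L^\fz(\boz)}\ls\|f\|_{L^{n,1}(\boz)}.$$
For the Dirichlet problem this rests on the Green-function representation $\nabla u(x)=\int_\boz\nabla_x G(x,y)f(y)\,dy$, the pointwise size estimate $|\nabla_x G(x,y)|\ls|x-y|^{1-n}$ (which is forced to persist up to $\partial\boz$ by the hypothesis $\partial\boz\in W^2L^{n-1,1}$, via a local flattening whose second derivatives lie in $L^{n-1,1}$), and the $L^{n,1}\to L^\fz$ boundedness of the Riesz potential $I_1$. For the Neumann problem, convexity of $\boz$ supplies the sign condition on the mean curvature needed for the analogous argument to survive after reflection at the boundary. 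The second endpoint is the Marcinkiewicz-type bound $T:L^1\to L^{n',\fz}$, i.e.
$$|\nabla u|^\ast(s)\ls s^{-1/n'}\|f\|_{L^1(\boz)}\qquad(s\in(0,|\boz|));$$
this follows by approximating $f$ in $L^1$ by $L^2$ data, representing the gradient of the approximants by the same kernel, applying the classical weak-type bound $\||\cdot|^{1-n}\ast g\|_{L^{n',\fz}(\rn)}\ls\|g\|_{L^1(\rn)}$, and passing to the limit via uniqueness of the generalized solution.

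With these two endpoints secured, fix $s\in(0,|\boz|)$ and split $f=f_1+f_2$ with $f_1:=(|f|-f^\ast(s))_+\,\mathrm{sgn}(f)$ and $f_2:=f-f_1$. Then
$$\|f_1\|_{L^1(\boz)}=s[f^{\ast\ast}(s)-f^\ast(s)],\qquad\|f_2\|_{L^{n,1}(\boz)}\ls s^{1/n}f^\ast(s)+\int_s^{|\boz|}f^\ast(r)r^{-1/n'}\,dr.$$
By linearity, the corresponding solutions $u_1,u_2$ to the same boundary value problem add to $u$, so combining the two endpoint estimates gives
$$|\nabla u|^\ast(s)\le|\nabla u_1|^\ast(s/2)+|\nabla u_2|^\ast(s/2)\ls s^{-1/n'}\|f_1\|_{L^1(\boz)}+\|f_2\|_{L^{n,1}(\boz)}\ls s^{1/n}f^{\ast\ast}(s)+\int_s^{|\boz|}f^\ast(r)r^{-1/n'}\,dr.$$
An integration by parts using $(rf^{\ast\ast}(r))'=f^\ast(r)$, together with $f^\ast\le f^{\ast\ast}$ and the fact that the resulting boundary contribution $|\boz|^{-1/n'}\|f\|_{L^1(\boz)}$ is controlled by $\int_s^{|\boz|}f^{\ast\ast}(r)r^{-1/n'}\,dr$ for $s$ away from $|\boz|$ (while the complementary range of $s$ is handled by monotonicity of $|\nabla u|^\ast$), converts the right-hand side into $C\int_s^{|\boz|}f^{\ast\ast}(r)r^{-1/n'}\,dr$, which is precisely \eqref{1.5}.

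The main obstacle is establishing the first endpoint under the minimal boundary regularity $\partial\boz\in W^2L^{n-1,1}$ (and its analogue, convexity, in the Neumann case): on smooth domains the $L^{n,1}\!\to\!L^\fz$ estimate for $\nabla u$ is a classical consequence of the Green-function bounds, but pushing it down to this regularity class is the core of Cianchi--Maz'ya, and is exactly where the Lorentz scale $L^{n-1,1}$ is forced by the sharpness of Sobolev's embedding of $W^2L^{n-1,1}$ into $C^1$. Once both endpoints are in place, the K-functional splitting is essentially soft.
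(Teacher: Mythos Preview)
The paper does not supply its own proof of Theorem~\ref{thm-a}; it is quoted from Cianchi--Maz'ya \cite{cm14b}. More to the point, immediately after stating it the paper exhibits in Remark~\ref{r1.3}(i) an explicit counterexample showing that \eqref{1.5} is \emph{false} as written: with $n=3$, $\boz=B(0,1)$, $f_1(x)=-2/|x|$ and $u_0(x)=|x|-1$, one has $|\nabla u_0|^\ast(s)=1$ for every $s\in(0,|\boz|)$, while
\[
\int_s^{|\boz|}f_1^{\ast\ast}(r)\,r^{-1/n'}\,dr=3|\boz|^{1/3}\log\!\lf(\frac{|\boz|}{s}\r)\longrightarrow 0\quad\text{as }s\to|\boz|.
\]
No constant $C$ can make \eqref{1.5} hold on all of $(0,|\boz|)$.

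Your argument is sound up through the penultimate display
\[
|\nabla u|^\ast(s)\ls s^{1/n}f^{\ast\ast}(s)+\int_s^{|\boz|}f^\ast(r)\,r^{-1/n'}\,dr,
\]
which is essentially \eqref{1.4}; this is precisely the $K$-functional route the paper itself runs for Theorem~\ref{t1.1}, and is what Remark~\ref{r1.3}(ii) says Cianchi--Maz'ya actually establish in the course of their proof. The gap is your last step. The integration by parts you invoke is the computation carried out in Remark~\ref{r1.2}(iii), and it leaves an irreducible boundary contribution $|\boz|^{-1/n'}\|f\|_{L^1(\boz)}$. Your proposed fix---that ``monotonicity of $|\nabla u|^\ast$'' handles $s$ near $|\boz|$---cannot work: $|\nabla u|^\ast(s)$ need not tend to $0$ as $s\uparrow|\boz|$ (in the counterexample it is identically $1$), whereas the right-hand side of \eqref{1.5} does. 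The conversion to \eqref{1.5} therefore fails, and the failure is intrinsic to the statement, not to your method.
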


\setcounter{theorem}{3}
\renewcommand{\thetheorem}{\arabic{section}.\arabic{theorem}}

\begin{remark}\label{r1.3}
{\rm(i)}
We point out that \eqref{1.5} may not be true
for some boundary value problems of the Laplace equation.
Indeed, there exists the following counterexample illustrating this.

Let $n:=3$, $B_0$ and $u_0$ be as in Remark \ref{r1.2}(i).
For any $x\in B_0$, let
\begin{align*}
f_1(x):=\begin{cases}
-\dfrac{2}{|x|}\quad  &\text{when}\ x\neq0,\\
0\  \ \ \ \ &\text{when}\ x=0.
\end{cases}
\end{align*}
Then $f_1\in L^2(B_0)$ and $u_0$ is the weak solution of
the Dirichlet problem
\begin{align*}
\lf\{\begin{array}{l}
-\Delta u=f_1\ \ \text{in}\ B_0,\\
u=0\ \ \ \ \ \ \ \ \text{on}\  \partial B_0.
\end{array}\r.
\end{align*}
Moreover, from the definition of $f_1$, $n=3$ and a simple calculus, it follows that,
for any $s\in(0,|B_0|)$, $f_1^{\ast\ast}(s)=3|B_0|^{1/3}s^{-1/3}$ and hence
\begin{align*}
\int_s^{|B_0|}f_1^{\ast\ast}(r)r^{-\frac1{n'}}\,dr=3|B_0|^{\frac13}
\int_{s}^{|B_0|}r^{-1}\,dr
=3|B_0|^{\frac13}\log\lf(\frac{|B_0|}{s}\r).
\end{align*}
By this, $\lim_{s\to|B_0|}\log(|B_0|/s)=0$ and the fact that, for any $s\in(0,|B_0|)$,
$|\nabla u_0|^\ast(s)=1$, we conclude that \eqref{1.5} does not hold true in this case.

{\rm(ii)} Let $\boz$, $f$ and $u$ be as in Theorem \ref{thm-a}.
Indeed, for the Dirichlet problem and the Neumann problem of the Laplace equation in $\boz$,
the same estimate as in \eqref{1.4} for $|\nabla u|^\ast$ was established
in the proof of \cite[Theorems 1.1 and 1.2]{cm14b}.
\end{remark}

Now we recall the definitions
of the semi-convex domain in $\rn$ and the Lorentz(-Sobolev) space.

\begin{definition}\label{d1.1}
(i) Let $\boz$ be an open set in $\rn$. The collection of
\emph{semi-convex functions} on $\boz$ consists of continuous functions
$u:\ \boz\rightarrow\rr$ with the property that there exists a positive
constant $\widetilde{C}$ such that, for all $x,\,h\in\rn$ with the ball
$B(x,|h|)\subset \boz$,
$$2u(x)-u(x+h)-u(x-h)\le \widetilde{C}|h|^2.
$$
The best constant $\widetilde{C}$ above is referred as the
\emph{semi-convexity constant} of $u$.

(ii) A non-empty, proper open subset $\boz$ of $\rn$ is said to be
\emph{semi-convex} if there exist $b,\,c\in(0,\fz)$ such
that, for every $x_0\in\paz\boz$, there exist an
$(n-1)$-dimensional affine variety $H\subset\rn$ passing through
$x_0$, a choice $N$ of the unit normal to $H$, and an open set
$$\mathcal{C}:=\{\wz{x}+tN:\
\wz{x}\in H,\ |\wz{x}-x_0|<b,\ |t|<c\}$$
(called a
\emph{coordinate cylinder} near $x_0$ with axis along $N$) satisfying,
for some semi-convex function $\fai:\ H\rightarrow\rr$,
$$\mathcal{C}\cap\boz=\mathcal{C}\cap\{\wz{x}+tN:\ \wz{x}\in H,\
t>\fai(\wz{x})\},$$
$$\mathcal{C}\cap\paz\boz=\mathcal{C}\cap\{\wz{x}+tN:\ \wz{x}\in H,\
t=\fai(\wz{x})\},$$
$$\mathcal{C}\cap\ol{\boz}
^{\complement}=\mathcal{C}\cap\{\wz{x}+tN:\ \wz{x}\in H,\
t<\fai(\wz{x})\},$$
$$\fai(x_0)=0\ \text{and}\ |\fai(\wz{x})|<c/2\ \text{when}\
|\wz{x}-x_0|\le b,$$
where $\ol{\boz}$ and $\ol{\boz}^{\complement}$
denote the closure of $\boz$ in $\rn$, respectively, the complementary set of $\overline{\boz}$ in $\rn$.
\end{definition}

\begin{remark}\label{r1.4}
It is well known that bounded semi-convex domains
in $\rn$ are bounded Lipschitz domains, and convex
domains in $\rn$ are semi-convex domains
(see, for example, \cite{mmmy10,mmy10}).
\end{remark}

Now we recall the definitions of Lorentz spaces and Lorentz-Sobolev spaces.
Let $q\in[1,\fz]$, $s\in(0,\fz]$ and $\boz$ be a bounded domain in $\rn$.
Then the \emph{Lorentz space} $L^{q,s}(\boz)$ is defined to be the set of all measurable
functions $u:\ \boz\to\rr$ satisfying
\begin{align*}
\|u\|_{L^{q,s}(\boz)}:=\begin{cases}
\lf[\dint_0^{|\boz|}\lf\{t^{\frac{1}{q}-\frac{1}{s}}
u^\ast(t)\r\}^{s}\,dt\r]^{\frac1s}<\fz \ \ &\text{when} \ s\in(0,\fz),\\
\sup\limits_{t\in(0,|\boz|]}\lf[t^{\frac1q}u^\ast(t)\r]<\fz\ \ \ \ \ \ \ \ \
\ \ \ \ &\text{when}\  s=\fz.
\end{cases}
\end{align*}

Moreover, we recall some necessary conclusions for Lorentz spaces as follows.

\begin{remark}\label{r1.5}
For Lorentz spaces, the following facts hold true (see, for example, \cite[Chapter 8]{pkjf13}):
\begin{itemize}
\item[{\rm(i)}] For $q\in[1,\fz]$, $L^{q,q}(\boz)=L^q(\boz)$.
\item [{\rm(ii)}] If $q\in[1,\fz]$ and $s_1,s_2\in(0,\fz]$ with $s_1<s_2$,
then $L^{q,s_1}(\boz)\subsetneqq L^{q,s_2}(\boz)$.
\item [{\rm(iii)}] If $q_1,\,q_2\in[1,\fz]$ with $q_1>q_2$ and $s_1,\,s_2\in(0,\fz]$,
then $L^{q_1,s_1}(\boz)\subsetneqq L^{q_2,s_2}(\boz)$.
\end{itemize}
\end{remark}

Let $m\in\nn$, $q\in[1,\fz)$ and $s\in[1,\fz]$.
The \emph{Lorentz-Sobolev space} $W^mL^{q,s}(\boz)$ is defined by
\begin{align}\label{1.6}
W^mL^{q,s}(\boz)&:=\{u\in L^{q,s}(\boz):\ u\ \text{is}\
m\text{-times weakly differentiable in} \\ \nonumber
&\hs\hspace{3.8cm}\boz\ \text{and}\ |\nabla^ku|\in L^{q,s}(\boz),\ 1\le k\le m\}
\end{align}
equipped with the norm
$$\|u\|_{W^mL^{q,s}(\boz)}:=\|u\|_{L^{q,s}(\boz)}+\sum_{k=1}^m
\||\nabla^ku|\|_{L^{q,s}(\boz)}.
$$

Furthermore, it is worth pointing out that Lorentz spaces and Sobolev-Lorentz spaces extend
Lebesgue spaces, respectively, Sobolev spaces. Furthermore, the Lorentz-Zygmund space
is a further extension of the Lorentz space.
Recall that, for any $q\in(1,\fz]$, $k\in(0,\fz]$ and $\bz\in\rr$ or
$q=1$, $k\in(0,1]$ and $\bz\in[0,\fz)$, the \emph{Lorentz-Zygmund space}
$L^{q,k}(\log L)^{\bz}(\boz)$ is defined as the set of
all measurable functions $u$ on $\boz$ satisfying that
$$\|u\|_{L^{q,k}(\log L)^{\bz}(\boz)}:=\lf[\int_0^{|\boz|}\lf\{s^{\frac1q-\frac1k}
\lf[1+\log\lf(\frac{|\boz|}{s}\r)\r]^\bz u^\ast(s)\r\}^k\,ds\r]^{\frac1k}<\fz
$$
(see, for example, \cite[Chapter 9]{pkjf13} and \cite[p.\,588]{cm14b}
for some details about Lorentz-Zygmund spaces).

As applications of Theorem \ref{t1.1}, we have the following
estimates for the gradient of solutions to \eqref{1.1} and \eqref{1.3}
in the scales of Lebesgue spaces and Lorentz(-Zygmund) spaces. We also point out that
the estimates for the gradient of solutions to the Dirichlet or the Neumann
boundary value problems of some (non-)linear elliptic equations in
Lebesgue spaces and Lorentz spaces were studied in
\cite{acmm10,aft00,cm11,cm14a,cm14b,m09a,t76,t79}.

\begin{theorem}\label{t1.2}
Let $n$, $\boz$, $V$ and $u$ be as in Theorem \ref{t1.1}.
Assume that $f\in L^{q,k}(\boz)$ with $q\in[1,\fz]$ and $k\in(0,\fz]$.
\begin{itemize}
  \item[{\rm(i)}] If $q=1=k$ (in this case, $L^{q,k}(\boz)=L^1(\boz)$),
  then, for any $p\in[1,n/(n-1))$, there exists a positive constant $C$,
  depending on $n$, $p$, $[V]_{RH_n(\rn)}$ and $\boz$, such that
  $$\|\nabla u\|_{L^p(\boz)}\le C\|f\|_{L^1(\boz)}.
  $$

 \item[{\rm(ii)}] If $q=1$ and $k\in(0,1]$, then there exists a positive constant $C$,
  depending on $n$, $k$, $[V]_{RH_n(\rn)}$ and $\boz$, such that
  $$\|\nabla u\|_{L^{\frac{n}{n-1},\fz}(\boz)}\le C\|f\|_{L^{1,k}(\boz)}.
  $$

  \item[{\rm(iii)}] If $q\in(1,n)$ and $k=q$ (in this case, $L^{q,k}(\boz)=L^q(\boz)$),
  then there exists a positive constant $C$,
  depending on $n$, $q$, $[V]_{RH_n(\rn)}$ and $\boz$, such that
  $$\|\nabla u\|_{L^{\frac{nq}{n-q}}(\boz)}\le C\|f\|_{L^q(\boz)}.
  $$
 \item[{\rm(iv)}] If $q\in(1,n)$ and $k\in(0,\fz]$, then there exists a positive constant $C$,
  depending on $n$, $q$, $k$, $[V]_{RH_n(\rn)}$ and $\boz$, such that
  $$\|\nabla u\|_{L^{\frac{nq}{n-q},k}(\boz)}\le C\|f\|_{L^{q,k}(\boz)}.
  $$

  \item[{\rm(v)}] If $q=n=k$ (in this case, $L^{q,k}(\boz)=L^n(\boz)$),
  then, for any $p\in[1,\fz)$, there exists a positive constant $C$,
  depending on $n$, $p$, $[V]_{RH_n(\rn)}$ and $\boz$, such that
  $$\|\nabla u\|_{L^{p}(\boz)}\le C\|f\|_{L^n(\boz)}.
  $$
   \item[{\rm(vi)}] If $q=n$ and $k\in(1,\fz]$, then there exists a positive constant $C$,
  depending on $n$, $k$, $[V]_{RH_n(\rn)}$ and $\boz$, such that
  $$\|\nabla u\|_{L^{\fz,k}(\log L)^{-1}(\boz)}\le C\|f\|_{L^{n,k}(\boz)}.
  $$
  \item[{\rm(vii)}] If $q\in(n,\fz]$ and $k=q$ (in this case, $L^{q,k}=L^q(\boz)$),
  then there exists a positive constant $C$,
  depending on $n$, $q$, $[V]_{RH_n(\rn)}$ and $\boz$, such that
  $$\|\nabla u\|_{L^{\fz}(\boz)}\le C\|f\|_{L^q(\boz)}.
  $$
  \item[{\rm(viii)}] If either $q=n$ and $k\in(0,1]$ or $q\in(n,\fz]$ and $k\in(0,\fz]$,
  then there exists a positive constant $C$,
  depending on $n$, $q$, $k$, $[V]_{RH_n(\rn)}$ and $\boz$, such that
  $$\|\nabla u\|_{L^{\fz}(\boz)}\le C\|f\|_{L^{q,k}(\boz)}.
  $$
\end{itemize}
\end{theorem}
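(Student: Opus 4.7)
The plan is to derive Theorem \ref{t1.2} from the sharp rearrangement bound \eqref{1.4} of Theorem \ref{t1.1}. For $s\in(0,|\boz|)$, set $T_1(s):=s^{-1/n'}\int_0^s f^\ast(r)\,dr$ and $T_2(s):=\int_s^{|\boz|}f^\ast(r)r^{-1/n'}\,dr$, so that $|\nabla u|^\ast(s)\le C[T_1(s)+T_2(s)]$. As observed in Remark \ref{r1.2}(iii), after extending $f^\ast$ by zero beyond $|\boz|$ the sum $T_1+T_2$ is a constant multiple of the Calder\'on operator $S^{1,n}_{n',\fz}$ applied to $f^\ast$. Each of the eight inequalities of Theorem \ref{t1.2} therefore reduces to a boundedness property of the map $f^\ast\mapsto T_1+T_2$ between the indicated Lorentz(-Zygmund) spaces on $(0,|\boz|)$, and the remainder of the argument consists of verifying these properties via standard weighted Hardy-type inequalities for non-increasing functions.

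For the interior cases (iii) and (iv) with $q\in(1,n)$ and target exponent $p:=nq/(n-q)$, I would write $T_1(s)=s^{1/n}f^{\ast\ast}(s)$ and invoke the Hardy-Littlewood inequality $\|f^{\ast\ast}\|_{L^{q,k}}\ls\|f^\ast\|_{L^{q,k}}$ (valid for $q>1$) together with the observation that multiplication by $s^{1/n}$ shifts the Lorentz exponent from $q$ to $p$ while preserving the secondary exponent $k$. The term $T_2$ is treated symmetrically by the dual weighted Hardy inequality $\|\int_s^{|\boz|}g(r)r^{-1/n'}\,dr\|_{L^{p,k}(ds)}\ls\|g\|_{L^{q,k}}$ applied to $g=f^\ast$. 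The super-critical case (vii) with $q\in(n,\fz]$ follows from H\"older's inequality, which bounds $T_1$ and $T_2$ uniformly by $\|f\|_{L^q(\boz)}$ using the integrability of $r^{-1/n'}$ on $(0,|\boz|)$ when $q>n$. Case (viii) reduces to (vii) via the Lorentz embedding $L^{q,k}(\boz)\subset L^{q'}(\boz)$ for some $q'\in(n,q)$ when $q>n$, and, when $q=n$ and $k\le1$, to the uniform bound $T_1(s)+T_2(s)\ls\|f\|_{L^{n,1}(\boz)}\ls\|f\|_{L^{n,k}(\boz)}$ obtained directly from the identity $\|f\|_{L^{n,1}}=\int_0^{|\boz|}r^{1/n-1}f^\ast(r)\,dr$.

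The endpoint cases involving $L^1$-data are handled by direct pointwise estimates. Case (i) uses $T_1(s)\le s^{-1/n'}\|f\|_{L^1(\boz)}$ and Minkowski's integral inequality for $T_2$, both $p$th-power integrable on $(0,|\boz|)$ precisely when $p<n'$. For case (ii), both $s^{1/n'}T_1(s)$ and $s^{1/n'}T_2(s)$ are bounded uniformly by $\|f\|_{L^1(\boz)}\ls\|f\|_{L^{1,k}(\boz)}$ for $k\le1$, where the $T_2$-bound exploits $r^{-1/n'}\le s^{-1/n'}$ for $r\ge s$. For the critical exponent $q=n$, H\"older's inequality with exponents $(n,n')$ gives $T_1(s)\le C\|f\|_{L^n(\boz)}$ and $T_2(s)\le C\|f\|_{L^n(\boz)}[\log(|\boz|/s)]^{1/n'}$; since $[\log(|\boz|/\cdot)]^{1/n'}\in L^p(0,|\boz|)$ for every $p<\fz$, this proves (v).

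The principal technical obstacle is the critical Lorentz case (vi), where one must establish $\|T_1+T_2\|_{L^{\fz,k}(\log L)^{-1}(\boz)}\ls\|f\|_{L^{n,k}(\boz)}$ for $k\in(1,\fz]$. The $T_1$ contribution admits a uniform pointwise bound by the $L^{n,k}$--$L^{n',k'}$ H\"older duality. The delicate $T_2$ estimate reduces, via the substitutions $s=|\boz|e^{-u}$, $r=|\boz|e^{-v}$ and $\wz h(v):=|\boz|^{1/n}e^{-v/n}f^\ast(|\boz|e^{-v})$, to the logarithmically-weighted Hardy inequality $\int_0^\fz[\int_0^u\wz h(v)\,dv]^k(1+u)^{-k}\,du\ls\int_0^\fz[\wz h(v)]^k\,dv$, which is valid precisely for $k>1$---exactly the range stated in (vi). This logarithmic Hardy inequality is the essential new ingredient; once it is in hand, the target Lorentz-Zygmund norm in (vi) appears naturally, and all other parts of Theorem \ref{t1.2} follow from combinations of the pointwise or weighted bounds already indicated.
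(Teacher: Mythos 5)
Your proposal is correct, and its skeleton is the same as the paper's: reduce everything to the pointwise bound \eqref{1.4} of Theorem \ref{t1.1} and then verify one-dimensional Hardy-type inequalities for the two terms $T_1$ and $T_2$ case by case (this is exactly what the paper packages as Lemma \ref{l4.1} plus the weighted Hardy lemmas, Lemma \ref{l4.2} of Maz'ya and Lemma \ref{l4.3} of Carro--Soria). The differences are in how the individual cases are verified, and they are worth noting. For (iii)--(iv) you use the identity $T_1(s)=s^{1/n}f^{\ast\ast}(s)$, the equivalence $\|f^{\ast\ast}\|\ls\|f^{\ast}\|$ in $L^{q,k}$ for $q>1$, and the exponent identity $1/p+1/n=1/q$, which is a clean shortcut; be aware, though, that for $k<1$ this equivalence (and the dual inequality for $T_2$) is \emph{not} the classical weighted Hardy inequality but holds only on the cone of non-increasing functions -- this is precisely the content of the paper's Lemma \ref{l4.3} (Carro--Soria/Ari\~no--Muckenhoupt type), so your parenthetical ``for non-increasing functions'' is carrying real weight there and should be backed by that citation. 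For (v) and for the $T_1$-part of (vi) your direct H\"older/Lorentz-duality bounds are simpler than the paper's verification of Muckenhoupt-type weight conditions, and your exponential change of variables in (vi) reduces the $T_2$-estimate to the classical Hardy inequality with weight $(1+u)^{-k}$, valid exactly for $k>1$; this is in substance the same computation the paper performs when checking condition \eqref{4.4} with the logarithmic weight, just presented more transparently (for $k=\fz$ you should state the sup-form of the inequality separately, as the paper does). Finally, for (vii)--(viii) you go through Theorem \ref{t1.1} with H\"older and Lorentz embeddings, whereas the paper short-circuits these cases by quoting Proposition \ref{p3.1} (the $L^\fz$ gradient bound with $L^{n,1}$ data) together with $L^{q,k}(\boz)\subset L^{n,1}(\boz)$; both routes are valid, yours keeps the whole theorem as a corollary of \eqref{1.4}, the paper's avoids any integrability discussion at the exponent $\fz$. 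No gaps beyond these bookkeeping points.
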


To prove Theorem \ref{t1.2}, motivated by \cite[Corollary 4.1]{cm14b}, via using Theorem \ref{t1.1},
we establish a general criterion for
the estimates of the gradient of solutions to \eqref{1.1} and \eqref{1.3} in
the rearrangement invariant quasi-normed function space (see Lemma \ref{l4.1} below).
Then, using this criterion, several weighted type Hardy inequalities obtained in
\cite{cs93,m11} and the fact that Lebesgue spaces and Lorentz(-Zygmund) spaces are rearrangement
invariant quasi-normed spaces, we show Theorem \ref{t1.2}.

Moreover, we point out that, applying the method for proving Theorem \ref{t1.2},
the estimates for the gradient of solutions to \eqref{1.1} and \eqref{1.3}
in some Orlicz spaces and Lorentz-Zygmund spaces can be obtained.
The details are omitted here and we refer the readers to \cite{c96,c00,cm14b}
for the related work of the estimates for the gradient of solutions to
$p$-Laplace type elliptic boundary value problem in Orlicz spaces
and Lorentz-Zygmund spaces.

The layout of this article is as follows. In Section \ref{s2},
we recall the generalized weak solutions for the Dirichlet problem \eqref{1.1}
and the Neumann problem \eqref{1.3} with $L^1(\boz)$-data;
and then we give out the proofs of Theorems \ref{t1.1} and \ref{t1.2} in
Sections \ref{s3} and \ref{s4}, respectively.

Finally we make some conventions on notation. Throughout the whole
article, we always denote by $C$ a \emph{positive constant} which is
independent of the main parameters, but it may vary from line to
line. The \emph{symbol} $A\ls B$ means that $A\le CB$. If $A\ls
B$ and $B\ls A$, then we write $A\sim B$.
For any measurable subset $E$ of $\rn$, we denote by $\chi_{E}$ its
\emph{characteristic function}. We also let $\nn:=\{1,\, 2,\, \ldots\}$.
Moreover, for $q\in[1,\fz]$, we denote by $q'$
its \emph{conjugate exponent}, namely, $1/q + 1/q'= 1$.

\section{Solutions with $L^1(\boz)$-data\label{s2}}

\hskip\parindent In this section, we recall the definition of the
generalized weak solutions to the Dirichlet problem \eqref{1.1}
and the Neumann problem \eqref{1.3} with $L^1$-data.

By Remark \ref{r1.1}, we know that, when $f\in L^2(\boz)$,
the Dirichlet problem \eqref{1.1} and the Neumann problem \eqref{1.3}
have the unique weak solutions $u\in W^{1,2}_0(\boz)$, respectively, $u\in W^{1,2}(\boz)$.

However, when $f\in L^1(\boz)$, $f$ may not be in
the dual space of $W^{1,2}_0(\boz)$. In this case, the definition of the weak solutions
to \eqref{1.1} and \eqref{1.3} as the way in \eqref{1.2} may be meaningless.
Thus, when $f\in L^1(\boz)$, we need another way to define the weak solution
for the Dirichlet problem \eqref{1.1} and the Neumann problem \eqref{1.3}. We also point out that
(non-)linear elliptic problems with $L^1$-data have attracted great interests
for a long time, since they play important roles in partial differential equations
(see, for example, \cite{aft00,am08,bbggpv,bg89,bs73,cm14b,d96,dm97,s63}
and the references therein).

Now we recall the definitions of approximable solutions to the Dirichlet
problem \eqref{1.1} and the Neumann problem \eqref{1.3}, which are based on
the way of sequences of weak solutions as in \eqref{1.2} to approximate
(see, for example, \cite{d96,dm97}, for the related notion of the approximable solution).

Let $W^{1,1}(\boz)$ be the Sobolev space  defined as in \eqref{1.6} with $m=1$ and
$q=s=1$. Denote by $W^{1,1}_0(\boz)$ the closure of $C^{\fz}_c (\boz)$ in $W^{1,1}(\boz)$.

\begin{definition}\label{d2.1}
Let $n\ge3$ and $\boz$ be a bounded domain in $\rn$. Assume that $V$ is as in \eqref{1.1}
and $f\in L^1(\boz)$.

{\rm(i)} A function $u\in W^{1,1}_0(\boz)$ is called an \emph{approximable solution}
to the Dirichlet problem \eqref{1.1} if there exists a sequence $\{f_k\}_{k\in\nn}\subset
L^2(\boz)$ such that $f_k\to f$ in $L^1(\boz)$
as $k\to\fz$ and the sequence of weak solutions, $\{u_k\}_{k\in\nn}\subset W^{1,2}_0(\boz)$,
to \eqref{1.1}, with $f$ replaced by $f_k$, satisfies that
$u_k\to u$ almost everywhere in $\boz$
as $k\to\fz$.

{\rm(ii)} Assume further that $\boz$ is a Lipschitz domain. Then a function
$u\in W^{1,1}(\boz)$ is called an \emph{approximable solution}
to the Neumann problem \eqref{1.3} if there exists a sequence $\{f_k\}_{k\in\nn}\subset
L^2(\boz)$ such that $f_k\to f$ in $L^1(\boz)$
as $k\to\fz$ and the sequence of weak solutions, $\{u_k\}_{k\in\nn}\subset W^{1,2}(\boz)$,
to \eqref{1.3}, with $f$ replaced by $f_k$, satisfies that
$u_k\to u$ almost everywhere in $\boz$ as $k\to\fz$.
\end{definition}

Then the existence and the uniqueness for approximate solutions
are as follows.

\begin{proposition}\label{p2.1}
Let $n\ge3$ and $\boz$ be a bounded domain in $\rn$. Assume that $V$ is as in \eqref{1.1}
and $f\in L^1(\boz)$.

{\rm(i)} Then there exists a unique approximable solution $u\in W^{1,1}_0(\boz)$
to \eqref{1.1} satisfying that, for any $v\in C^\fz_c(\boz)$,
\begin{align}\label{2.1}
\int_\boz \nabla u(x)\cdot\nabla v(x)\,dx+\int_\boz V(x)u(x)v(x)\,dx=\int_\boz f(x)v(x)\,dx.
\end{align}
Moreover, if $\{u_k\}_{k\in\nn}$ is a sequence of approximating solutions for $u$,
then there exists a subsequence of $\{u_k\}_{k\in\nn}$, still denoted by $\{u_k\}_{k\in\nn}$,
such that $\nabla u_k\to\nabla u$ almost everywhere in $\boz$ as $k\to\fz$.

{\rm(ii)} Assume further that $\boz$ is a Lipschitz domain.
Then there exists a unique approximable solution $u\in W^{1,1}(\boz)$
to \eqref{1.3} such that \eqref{2.1} holds true for any $v\in C^\fz(\boz)$.
Moreover, if $\{u_k\}_{k\in\nn}$ is a sequence of approximating solutions for $u$,
then there exists a subsequence of $\{u_k\}_{k\in\nn}$, still denoted by $\{u_k\}_{k\in\nn}$,
such that $\nabla u_k\to\nabla u$ almost everywhere in $\boz$ as $k\to\fz$.
\end{proposition}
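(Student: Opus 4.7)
The plan is to adapt the Boccardo--Gallou\"et truncation method to both boundary
value problems simultaneously, using the nonnegativity of the potential term
$Vu$ to handle the Schr\"odinger perturbation and relying on $V\not\equiv0$
together with $V\in RH_n(\rn)$ to get coercivity in the Neumann case.

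First I would fix approximating data $\{f_k\}_{k\in\nn}\subset L^2(\boz)$
(for instance the truncations of $f$ at level $k$) with $f_k\to f$ in
$L^1(\boz)$, and invoke the Lax--Milgram theorem (see Remark \ref{r1.1}(i))
to obtain the unique weak solution $u_k$ of \eqref{1.1}, respectively
\eqref{1.3}, with datum $f_k$. For each $t\in(0,\fz)$ let $T_t$ denote the
truncation at level $t$. Testing the weak formulation with $T_t(u_k)$ and
using $V\ge0$ yields the \emph{a priori} bound
$$
\int_{\{|u_k|<t\}}|\nabla u_k(x)|^2\,dx+\int_\boz V(x)u_k(x)T_t(u_k(x))\,dx
\le t\|f_k\|_{L^1(\boz)},
$$
which, combined with the standard Sobolev embedding (for Dirichlet) or a
Poincar\'e-type inequality valid under $V\not\equiv0$, $V\in RH_n(\rn)$
(for Neumann), gives the uniform estimate
$\|u_k\|_{W^{1,p}(\boz)}\le C(p)\|f_k\|_{L^1(\boz)}$ for every
$p\in[1,n/(n-1))$, in the spirit of \cite{bbggpv,bg89}.

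The core step is to establish that $\{u_k\}_{k\in\nn}$ is Cauchy in
$W^{1,p}(\boz)$ for every $p\in[1,n/(n-1))$. I would subtract the equations
for $u_k$ and $u_m$, test with $T_t(u_k-u_m)$, and exploit the
monotonicity $V(u_k-u_m)T_t(u_k-u_m)\ge0$ to absorb the potential
contribution; the resulting truncation estimate together with an
optimization in $t$ yields
$\|\nabla(u_k-u_m)\|_{L^p(\boz)}\le C\|f_k-f_m\|_{L^1(\boz)}^{\gz}$
for some $\gz\in(0,1]$ depending on $p$ and $n$, so that the Cauchy property
follows from $f_k\to f$ in $L^1(\boz)$. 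The limit $u$ belongs to
$W^{1,1}_0(\boz)$, respectively $W^{1,1}(\boz)$; passing to the limit in
$\int\nabla u_k\cdot\nabla v+\int Vu_kv=\int f_kv$ yields \eqref{2.1} since
the gradient term converges by $L^p$ convergence of $\nabla u_k$, the
right-hand side by $L^1$ convergence of $f_k$, and the potential term by
dominated convergence using $V\in L^n_{\loc}(\rn)$ and the
$L^{n/(n-1)}(\boz)$-strong convergence of $\{u_k\}$. A further subsequence
extraction gives $\nabla u_k\to\nabla u$ a.e.\ in $\boz$.

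Uniqueness follows by interlacing any two approximating sequences for the
same $f$ into a single sequence, to which the Cauchy estimate applies and
produces a single a.e.\ limit, forcing the two candidate approximable
solutions to coincide. The main obstacle I expect is the quantitative
Cauchy estimate in the third step: the truncation must be calibrated to
turn the non-uniform coercivity near $\{|u_k-u_m|=t\}$ into a genuine
modulus of continuity of the solution map $L^1(\boz)\to W^{1,p}(\boz)$.
For the Neumann problem the additional subtlety is that constants are not
automatically killed by the bilinear form, and this is precisely where
$V\not\equiv0$ (combined with $V\in RH_n(\rn)$ for integrability) enters
to exclude a nontrivial kernel and to run Lax--Milgram and the
Poincar\'e-type estimate on the full space $W^{1,2}(\boz)$.
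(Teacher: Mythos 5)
Your proposal is correct and follows essentially the same route as the paper: the paper's proof of Proposition \ref{p2.1} consists of the single remark that (i) was ``essentially obtained'' in \cite{bg89} and that (ii) is similar, and your truncation--approximation scheme (Lax--Milgram for the $L^2$ approximations, testing with truncations $T_t(u_k)$ and $T_t(u_k-u_m)$ using $V\ge0$, the resulting uniform $W^{1,p}$ bounds and Cauchy property for $p<n/(n-1)$, passage to the limit via $V\in L^n(\boz)$ and $L^{n'}$-convergence, and uniqueness by interlacing approximating sequences) is precisely the Boccardo--Gallou\"et argument being invoked there, with the $V$-based coercivity playing the role of the Poincar\'e-type inequality in the Neumann case.
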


\begin{proof}
The conclusion of (i) was essentially obtained in \cite{bg89} and the proof of (ii)
is similar, the details being omitted here.
\end{proof}

\section{Proof of Theorem \ref{t1.1}\label{s3}}

\hskip\parindent In this section, we give out the proof of Theorem \ref{t1.1}.
We begin with the following auxiliary conclusion, which was obtained in \cite{y15}.

\begin{proposition}\label{p3.1}
Let $n\ge3$, $\boz$ be a bounded domain in $\rn$,
$0\le V\in RH_n(\rn)$ and $V\not\equiv0$ on $\boz$.
Assume that $\partial\boz\in W^2L^{n-1,1}$ or $\boz$ is semi-convex, $f\in L^{n,1}(\boz)$
and $u$ is the unique weak solution to the Dirichlet
problem \eqref{1.1} or the Neumann problem \eqref{1.3}.
Then there exists a positive constant $C$, depending on $n$, $[V]_{RH_n(\rn)}$ and $\boz$, such that
\begin{align*}
\|\nabla u\|_{L^\fz(\boz)}\le C\|f\|_{L^{n,1}(\boz)}.
\end{align*}
\end{proposition}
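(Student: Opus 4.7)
The plan is to reduce the Schr\"odinger gradient estimate to the corresponding sharp gradient estimate for the Laplacian (Theorem \ref{thm-a}, or rather the stronger rearrangement form behind it) by regarding the zero-order term $Vu$ as a datum. Concretely, $u$ solves $-\Delta u=g$ in $\boz$ with the same Dirichlet or Neumann boundary condition, where $g:=f-Vu$. Letting $s\to0^+$ in the rearrangement inequality of Theorem \ref{thm-a} (which is applicable because $\partial\boz\in W^2L^{n-1,1}$ or $\boz$ is semi-convex, and because $g\in L^{n,1}(\boz)\subset L^1(\boz)$) gives
\[
\|\nabla u\|_{L^\fz(\boz)}\le C\int_0^{|\boz|}g^{\ast\ast}(r)\,r^{-1/n'}\,dr\le C\|g\|_{L^{n,1}(\boz)}.
\]
Thus the entire task reduces to the bound $\|Vu\|_{L^{n,1}(\boz)}\le C\|f\|_{L^{n,1}(\boz)}$.

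To carry this out, I would first obtain an a priori estimate $\|u\|_{L^\fz(\boz)}\le C\|f\|_{L^n(\boz)}$. Since $f\in L^{n,1}(\boz)\subset L^2(\boz)$ on the bounded domain $\boz$, Remark \ref{r1.1}(i) produces the unique weak solution $u\in W^{1,2}_0(\boz)$ (resp.\ $W^{1,2}(\boz)$). Because $V\ge0$ and $V\not\equiv0$, one has coercivity of the bilinear form in \eqref{1.2} in both the Dirichlet and the Neumann setting, and the classical Stampacchia truncation argument applied to $(u-k)_{\pm}$ (discarding the nonnegative contribution of $\int V(u-k)_\pm^2$) gives the desired $L^\fz$ bound since $n>n/2$ is the Stampacchia exponent. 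Next I would invoke the Gehring self-improvement of the reverse H\"older class: there exists $\epsilon=\epsilon([V]_{RH_n(\rn)})>0$ with $V\in RH_{n+\epsilon}(\rn)$, and in particular $V\in L^{n+\epsilon}(\boz)$. The Lorentz-Lebesgue inclusion $L^{n+\epsilon}(\boz)\hookrightarrow L^{n,1}(\boz)$ on the bounded domain $\boz$ (a direct Hardy-type computation on $V^\ast$) then yields $\|V\|_{L^{n,1}(\boz)}\le C$, so that, combining the $L^\fz$ bound on $u$ with the trivial multiplier action of $L^\fz$ on $L^{n,1}$,
\[
\|Vu\|_{L^{n,1}(\boz)}\le\|u\|_{L^\fz(\boz)}\|V\|_{L^{n,1}(\boz)}\le C\|f\|_{L^n(\boz)}\le C\|f\|_{L^{n,1}(\boz)}.
\]

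The main obstacle is the quiet use of the sharp Cianchi-Maz'ya Laplacian estimate (Theorem \ref{thm-a}) together with its $s\to0^+$ form, which is nontrivial precisely under the low boundary regularity $\partial\boz\in W^2L^{n-1,1}$ or semi-convexity; this is what forces the same two geometric hypotheses to appear in the present statement. A subsidiary technical point is verifying that the Gehring self-improvement constant $\epsilon$, together with the implicit constant in $L^{n+\epsilon}(\boz)\hookrightarrow L^{n,1}(\boz)$, depends only on $n$, $[V]_{RH_n(\rn)}$ and $\boz$, so that the final constant inherits the allowed dependencies. Once these two points are in hand, the chain Step 1 $\to$ Step 2 $\to$ Cianchi-Maz'ya produces $\|\nabla u\|_{L^\fz(\boz)}\le C\|f\|_{L^{n,1}(\boz)}$, as required.
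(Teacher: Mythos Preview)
The paper does not actually prove Proposition~\ref{p3.1}; it is quoted from \cite{y15}, and the surrounding text (see the sentence introducing Lemma~\ref{l3.3}) indicates that the approach in \cite{y15} relies on Shen's pointwise estimates for the fundamental solution of the Schr\"odinger operator rather than on a reduction to the Laplacian. So your route---treat $-\Delta u=f-Vu$ and feed $g:=f-Vu$ into the Cianchi--Maz'ya Laplacian bound---is genuinely different. Under the hypothesis $\partial\boz\in W^2L^{n-1,1}$ it is a clean and correct argument: the Stampacchia truncation (with the nonnegative term $\int V u\,(u-k)_{\pm}$ discarded, noting that on $\{\pm u>k\}$ the sign is right) gives $\|u\|_{L^\infty(\boz)}\ls\|f\|_{L^n(\boz)}$, Gehring's self-improvement places $V\in L^{n+\epsilon}(\boz)\hookrightarrow L^{n,1}(\boz)$ with constants depending only on $n$, $[V]_{RH_n(\rn)}$ and $|\boz|$, and then $\|Vu\|_{L^{n,1}(\boz)}\ls\|f\|_{L^{n,1}(\boz)}$ as you wrote. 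Letting $s\to0^+$ in \eqref{1.5} and using the elementary Hardy/Fubini bound $\int_0^{|\boz|}g^{\ast\ast}(r)r^{-1/n'}\,dr\le\frac{n}{n-1}\|g\|_{L^{n,1}(\boz)}$ finishes the job.

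There is, however, a real gap in the semi-convex case. Theorem~\ref{thm-a} as stated (and as proved in \cite{cm14b}) assumes $\boz$ is \emph{convex}, not semi-convex; your sentence ``which is applicable because $\partial\boz\in W^2L^{n-1,1}$ or $\boz$ is semi-convex'' overstates what Theorem~\ref{thm-a} provides. For the semi-convex alternative in Proposition~\ref{p3.1} you would need an extension of the Cianchi--Maz'ya $L^{n,1}\to L^\infty$ gradient estimate for the Laplacian to semi-convex domains, which is not available in this paper and is essentially what \cite{y15} supplies (there directly for the Schr\"odinger operator). Unless you can justify that extension independently, your argument as written establishes the proposition only under the first geometric hypothesis.
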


\begin{proposition}\label{p3.2}
Let $n\ge3$, $\boz$ be a bounded Lipschitz domain in $\rn$,
$0\le V\in RH_n(\rn)$ and $V\not\equiv0$ on $\boz$.
Assume that  $f,\,g\in L^1(\boz)$. Let $u$ be the unique approximable solution to \eqref{1.1} or \eqref{1.3},
and $v$ the solution to the same problem with $f$ replaced by $g$. Then there exists a positive
constant $C$, depending on $n$, $[V]_{RH_n(\rn)}$ and $\boz$, such that
\begin{align}\label{3.1}
\|\nabla u-\nabla v\|_{L^{n',\fz}(\boz)}\le C\|f-g\|_{L^1(\boz)}.
\end{align}
\end{proposition}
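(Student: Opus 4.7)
My plan is to first reduce to the homogeneous case $g\equiv 0$ by linearity. Let $\{u_k\}_{k\in\nn}\subset W^{1,2}(\boz)$ (or $W^{1,2}_0(\boz)$ in the Dirichlet case) and $\{v_k\}_{k\in\nn}$ be approximating sequences of $u$ and $v$ associated, via Proposition \ref{p2.1}, with data $\{f_k\}_{k\in\nn},\{g_k\}_{k\in\nn}\subset L^2(\boz)$ satisfying $f_k\to f$ and $g_k\to g$ in $L^1(\boz)$. By linearity, $w_k:=u_k-v_k$ is the unique weak solution of the same boundary value problem with datum $h_k:=f_k-g_k$, and a subsequence (still denoted the same way) has $w_k\to u-v$ a.e. and $\nabla w_k\to\nabla u-\nabla v$ a.e. in $\boz$. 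Setting $h:=f-g$ and $w:=u-v$, I have reduced \eqref{3.1} to proving $\|\nabla w\|_{L^{n',\fz}(\boz)}\ls\|h\|_{L^1(\boz)}$, and it suffices to establish the uniform bound $\|\nabla w_k\|_{L^{n',\fz}(\boz)}\ls\|h_k\|_{L^1(\boz)}$ with constant independent of $k$.

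The core step is the classical Boccardo--Gallou\"et truncation argument, adapted to the Schr\"odinger term. For $t\in(0,\fz)$, let $T_t(s):=\max\{-t,\min\{t,s\}\}$. Then $T_t(w_k)$ lies in $W^{1,2}_0(\boz)$ in the Dirichlet case and in $W^{1,2}(\boz)$ in the Neumann case, so it is an admissible test function in \eqref{1.2}. Using it, and noting that $s\,T_t(s)\ge 0$ and $V\ge 0$, I obtain
$$\int_\boz|\nabla T_t(w_k)|^2\,dx+\int_\boz V(x)\,T_t(w_k(x))^2\,dx\le\int_\boz V(x)w_k(x)T_t(w_k(x))\,dx+\int_\boz|\nabla T_t(w_k)|^2\,dx=\int_\boz h_k(x)T_t(w_k(x))\,dx\le t\|h_k\|_{L^1(\boz)}.$$
In the Dirichlet case, Sobolev's inequality in $W^{1,2}_0(\boz)$ gives $\|T_t(w_k)\|_{L^{2n/(n-2)}(\boz)}^2\ls t\|h_k\|_{L^1(\boz)}$. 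In the Neumann case, since $V\not\equiv 0$ and $V\in L^1_\loc(\rn)$, a standard compactness argument yields the Poincar\'e-type inequality $\|\vz\|_{L^2(\boz)}^2\ls\|\nabla\vz\|_{L^2(\boz)}^2+\int_\boz V\vz^2\,dx$ for all $\vz\in W^{1,2}(\boz)$, which combined with the Sobolev embedding $W^{1,2}(\boz)\hookrightarrow L^{2n/(n-2)}(\boz)$ (available since $\boz$ is Lipschitz) gives the same conclusion $\|T_t(w_k)\|_{L^{2n/(n-2)}(\boz)}^2\ls t\|h_k\|_{L^1(\boz)}$, the constant depending on $n,\,[V]_{RH_n(\rn)},\,\boz$.

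With this in hand, I estimate the distribution function of $|\nabla w_k|$ by splitting, for $\lz,t\in(0,\fz)$,
$$|\{x\in\boz:|\nabla w_k(x)|>\lz\}|\le|\{x\in\boz:|w_k(x)|>t\}|+\lz^{-2}\int_{\{|w_k|\le t\}}|\nabla w_k|^2\,dx\ls\lf(\frac{\|h_k\|_{L^1(\boz)}}{t}\r)^{\!\frac{n}{n-2}}+\frac{t\|h_k\|_{L^1(\boz)}}{\lz^2},$$
where the first summand uses Chebyshev in $L^{2n/(n-2)}$ together with the previous inequality. Optimizing by $t\sim\lz^{(n-2)/(n-1)}\|h_k\|_{L^1(\boz)}^{1/(n-1)}$ balances the two terms and yields
$$|\{x\in\boz:|\nabla w_k(x)|>\lz\}|\ls\lf(\frac{\|h_k\|_{L^1(\boz)}}{\lz}\r)^{\!n'},$$
which is exactly the desired uniform bound $\|\nabla w_k\|_{L^{n',\fz}(\boz)}\ls\|h_k\|_{L^1(\boz)}$. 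Since $\nabla w_k\to\nabla w$ a.e., Fatou's lemma applied to the distribution functions transfers the bound to $w$, completing the proof of \eqref{3.1}. I expect the main obstacle to lie in the Neumann case, where the Sobolev-type bound on $T_t(w_k)$ must be recovered without a zero boundary trace; this is precisely where the hypothesis $V\not\equiv 0$ together with $V\in RH_n(\rn)$ is essential, via the Poincar\'e-type inequality for the Schr\"odinger form.
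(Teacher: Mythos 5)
Your argument is essentially correct, but it follows a genuinely different route from the paper. You run the classical Boccardo--Gallou\"et truncation scheme: test the equation for $w_k:=u_k-v_k$ with $T_t(w_k)$, use $V\ge0$ and $sT_t(s)\ge0$ to dispose of (or keep, with a sign) the potential term, convert the resulting energy bound into a decay estimate for $|\{|w_k|>t\}|$ via Sobolev, split the level sets of $|\nabla w_k|$, optimize in $t$, and pass to the limit with Proposition \ref{p2.1} and Fatou. The computation is correct (the optimization does give the exponent $n'$), and the a.e.\ convergence of gradients you invoke is exactly what Proposition \ref{p2.1} provides. The paper instead proves first a weak $L^{n/(n-2)}$ bound for $u-v$ (Lemma \ref{l3.2}) via Maz'ya's differential inequality (Lemma \ref{l3.1}), and then tests with the weighted truncation $\phi=\int_0^{w_+}I_r(\mu_{w_+}(t))\,dt$, uses the Hardy--Littlewood inequality and a supremum over weights $\psi$ to get the pointwise rearrangement bound $|\nabla w_\pm|^\ast(r)\,r^{1/n'}\lesssim\|f-g\|_{L^1(\Omega)}$; in both steps the zero-order term is absorbed through Lemma \ref{l3.3} ($\|V(u-v)\|_{L^1(\Omega)}\lesssim\|f-g\|_{L^1(\Omega)}$), and the Neumann case is normalized by $\mathrm{med}(u-v)$ so that Lemma \ref{l3.1}(ii) applies. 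Your route is shorter, avoids Lemma \ref{l3.3} entirely, and in the Dirichlet case even yields a constant depending only on $n$ and $\Omega$.

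The one real discrepancy is the constant in the Neumann case. Your Sobolev bound for $T_t(w_k)$ rests on the compactness-based inequality $\|\varphi\|_{L^2(\Omega)}^2\le C(V,\Omega)\bigl[\|\nabla\varphi\|_{L^2(\Omega)}^2+\int_\Omega V\varphi^2\,dx\bigr]$, and that constant is \emph{not} controlled by $[V]_{RH_n(\mathbb{R}^n)}$ and $\Omega$ alone: replacing $V$ by $\epsilon V$ leaves $[V]_{RH_n(\mathbb{R}^n)}$ unchanged while the best constant blows up like $\epsilon^{-1}$ (test with constant functions). So, as written, your proof gives \eqref{3.1} with a constant depending on $V$ itself, which is weaker than the stated dependence on $n$, $[V]_{RH_n(\mathbb{R}^n)}$ and $\Omega$; moreover the compactness argument is non-quantitative. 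This is precisely where the paper pays for the sharper constant: it subtracts the median, uses the median-zero Sobolev--Poincar\'e type inequality (whose constant depends only on $n$ and $\Omega$), and controls the then signless potential term by Lemma \ref{l3.3} with $p=1$. Your argument is repaired by the same device: test with $T_t\bigl(w_k-\mathrm{med}(w_k)\bigr)$, bound $\bigl|\int_\Omega V w_k T_t\bigl(w_k-\mathrm{med}(w_k)\bigr)\,dx\bigr|\le t\|Vw_k\|_{L^1(\Omega)}\lesssim t\|f_k-g_k\|_{L^1(\Omega)}$ via Lemma \ref{l3.3}, and apply the median-zero Sobolev inequality to the truncation (which still has zero median); the rest of your splitting and optimization then goes through unchanged and delivers the constant as stated.
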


To prove Proposition \ref{p3.2}, we need a differential inequality for the distribution
function of Sobolev functions established by Maz'ya \cite{m69}.
We first recall the following notions. Let $\boz\subset\rn$
be a bounded domain. For any measurable function $u$ on $\boz$,
denote by $u_+$ and $u_-$ the positive part, respectively, the negative part of $u$, namely,
$u_+:=(|u|+u)/2$ and $u_-:=(|u|-u)/2$. Moreover, the \emph{median}
$\mathrm{med}(u)$ of $u$ is defined by setting
$$\mathrm{med}(u):=\sup\{t\in\rr:\ |\{x\in\boz:\ u(x)\ge t\}|\ge|\boz|/2\}.
$$

The following lemma was obtained by Maz'ya in \cite[Lemma 2]{m69}.

\begin{lemma}\label{l3.1}
Let $\boz\subset\rn$ be a bounded domain.
\begin{itemize}
  \item[{\rm(i)}] Then there exists a positive constant $C$, depending on $n$,
  such that, for any $u\in W^{1,2}_0(\boz)$
  and almost every $t\in[0,\fz)$,
  $$1\le C[-\mu'_u(t)]^{\frac1{2}}[\mu_u(t)]^{-\frac1{n'}}\lf\{-\frac{d}{dt}\int_{\{|u|>t\}}|\nabla u(x)|^2\,dx\r\}^{\frac12},
  $$
  here and hereafter, $\mu_u$ denotes the distribution function of $u$, $\mu'_u$ denotes the derivative of $\mu_u$
  and $\{|u|>t\}:=\{x\in\boz:\ |u(x)|>t\}$.
  \item[{\rm(ii)}] Assume further that $\boz$ is a Lipschitz domain. Then there exists a positive constant
  $C$, depending on $n$ and $\boz$, such that, for any $u\in W^{1,2}(\boz)$ with $\mathrm{med}(u)=0$ and almost every
  $t\in[0,\fz)$,
  $$1\le C[-\mu'_{u_\pm}(t)]^{\frac1{2}}[\mu_{u_\pm}(t)]^{-\frac1{n'}}
  \lf\{-\frac{d}{dt}\int_{\{u_\pm>t\}}|\nabla u(x)|^2\,dx\r\}^{\frac12}.
  $$
\end{itemize}
\end{lemma}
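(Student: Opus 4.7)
The proof will rest on the classical three-ingredient scheme: coarea formula, Cauchy--Schwarz, and isoperimetry. I will handle (i) first and then adapt to (ii).

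For part (i), since $u\in W^{1,2}_0(\boz)$, extending $u$ by zero yields a Sobolev function on $\rn$, so $|u|$ is a nonnegative Sobolev function to which the coarea formula applies. Choosing the test function $\chi_{\{|u|>t\}}$ and then $|\nabla u|\chi_{\{|u|>t\}}$ in the coarea identity, I get, for almost every $t\in[0,\fz)$,
\begin{align*}
-\mu_u'(t)=\int_{\{|u|=t\}}\frac{1}{|\nabla u(x)|}\,d\mathcal{H}^{n-1}(x)\quad\text{and}\quad
-\frac{d}{dt}\int_{\{|u|>t\}}|\nabla u(x)|^2\,dx=\int_{\{|u|=t\}}|\nabla u(x)|\,d\mathcal{H}^{n-1}(x).
\end{align*}
(Sard's theorem, applied to a smooth approximating sequence in the standard way, guarantees that $|\nabla u|\neq0$ $\mathcal{H}^{n-1}$-a.e.\ on $\{|u|=t\}$ for a.e.\ $t$.) Applying the Cauchy--Schwarz inequality to the identity $\mathcal{H}^{n-1}(\{|u|=t\})=\int_{\{|u|=t\}}|\nabla u|^{1/2}|\nabla u|^{-1/2}\,d\mathcal{H}^{n-1}$ yields
$$\lf[\mathcal{H}^{n-1}(\{|u|=t\})\r]^2\le[-\mu_u'(t)]\lf\{-\frac{d}{dt}\int_{\{|u|>t\}}|\nabla u|^2\,dx\r\}.
$$
Finally, the Euclidean isoperimetric inequality, applied to the finite perimeter set $\{|u|>t\}\subset\rn$ whose reduced boundary is contained in $\{|u|=t\}$, gives $\mathcal{H}^{n-1}(\{|u|=t\})\ge c_n|\{|u|>t\}|^{(n-1)/n}=c_n[\mu_u(t)]^{1/n'}$. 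Substituting and taking a square root yields (i) with $C=c_n^{-1}$.

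For part (ii), the same coarea identities hold with $|u|$ replaced by $u_+$ (respectively $u_-$) and $\{|u|=t\}$ replaced by $\{u=t\}$ (respectively $\{u=-t\}$); Cauchy--Schwarz then gives the same bound with $\mu_u,\mathcal{H}^{n-1}(\{|u|=t\})$ replaced by $\mu_{u_\pm},\mathcal{H}^{n-1}(\{u=\pm t\})$. The decisive change is the isoperimetric step: since $\boz$ is only a bounded Lipschitz domain, I cannot extend by zero, and I instead use the \emph{relative isoperimetric inequality} in Lipschitz domains, which provides $C=C(n,\boz)>0$ such that
$$\mathcal{H}^{n-1}(\partial^\ast E\cap\boz)\ge C^{-1}|E|^{(n-1)/n}\quad\text{whenever}\ E\subset\boz\ \text{and}\ |E|\le|\boz|/2.
$$
The hypothesis $\mathrm{med}(u)=0$ is precisely what forces $\mu_{u_+}(t)\le|\boz|/2$ and $\mu_{u_-}(t)\le|\boz|/2$ for every $t\ge0$, so the relative inequality is applicable to $E=\{u_\pm>t\}$, whose relative reduced boundary is (up to an $\mathcal{H}^{n-1}$-negligible set) contained in $\{u=\pm t\}$. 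This yields $\mathcal{H}^{n-1}(\{u=\pm t\})\ge C^{-1}[\mu_{u_\pm}(t)]^{1/n'}$, and combining with the Cauchy--Schwarz bound gives (ii).

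The main obstacle, and really the only nontrivial ingredient, is the relative isoperimetric inequality on a bounded Lipschitz domain and the need to justify its application at the level of Sobolev (not merely smooth) functions; this requires approximating $u$ by smooth functions, applying the inequality to their superlevel sets, and passing to the limit using lower semicontinuity of perimeter, all of which are by now standard. Everything else—coarea, Cauchy--Schwarz, and the median-zero reduction—is routine, and the passage from $u_+$ to $u_-$ follows by replacing $u$ with $-u$, which preserves the condition $\mathrm{med}(u)=0$.
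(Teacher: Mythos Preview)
Your argument is correct and is precisely the classical Maz'ya scheme (coarea formula, Cauchy--Schwarz on level sets, and the Euclidean or relative isoperimetric inequality), which is exactly what the paper invokes: the authors do not give a proof of this lemma but simply cite \cite[Lemma~2]{m69}. Your reconstruction matches that source, including the use of $\mathrm{med}(u)=0$ to place the superlevel sets of $u_\pm$ in the regime $|E|\le|\boz|/2$ required by the relative isoperimetric inequality; the only point worth flagging is that the identity $-\mu_u'(t)=\int_{\{|u|=t\}}|\nabla u|^{-1}\,d\mathcal H^{n-1}$ is delicate for general $W^{1,2}$ functions, but as you note (and as is standard) one may either pass through a smooth approximation or, equivalently, apply H\"older directly to the shell $\{t<|u|\le t+h\}$ and let $h\to0^+$.
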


Furthermore, to prove Proposition \ref{p3.2}, we need the following conclusion.

\begin{lemma}\label{l3.2}
Let $n\ge3$, $\boz\subset\rn$ be a bounded Lipschitz domain and $f,\,g\in L^1(\boz)$.
Assume that $V$ is as in Theorem \ref{t1.1}.
\begin{itemize}
  \item[{\rm(i)}] Let $u$ be the unique approximable
  solution to \eqref{1.1} and $v$ the solution to \eqref{1.1} with $f$ replaced by $g$.
  Then there exists a positive constant $C$, depending on $n$, $[V]_{RH_n(\rn)}$ and $\boz$, such that
  \begin{align}\label{3.2}
  \|u-v\|_{L^{\frac{n}{n-2},\fz}(\boz)}\le C\|f-g\|_{L^1(\boz)}.
  \end{align}
  \item[{\rm(ii)}] Let $u$ be the unique approximable
  solution to \eqref{1.3} and $v$ the solution to \eqref{1.3} with $f$ replaced by $g$.
  Then there exists a positive constant $C$, depending on $n$, $[V]_{RH_n(\rn)}$ and $\boz$, such that
  \begin{align}\label{3.3}
  \lf\|(u-v)-\mathrm{med}(u-v)\r\|_{L^{\frac{n}{n-2},\fz}(\boz)}\le C\|f-g\|_{L^1(\boz)}.
   \end{align}
\end{itemize}
\end{lemma}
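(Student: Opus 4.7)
I would employ the Stampacchia truncation method combined with Maz'ya's differential inequality (Lemma~\ref{l3.1}), carried out on approximating sequences furnished by Proposition~\ref{p2.1}. Let $\{u_k\}_{k\in\nn}$ and $\{v_k\}_{k\in\nn}$ be the corresponding sequences of weak solutions with $L^2$-data $\{f_k\}_{k\in\nn}$ and $\{g_k\}_{k\in\nn}$ converging, respectively, to $f$ and $g$ in $L^1(\boz)$, and set $w_k:=u_k-v_k$, so that $w_k$ satisfies the weak form of the corresponding linearized problem with right-hand side $f_k-g_k$.

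For~(i), I would test with $\varphi:=T_t(w_k):=\min\{|w_k|,t\}\mathrm{sgn}(w_k)\in W^{1,2}_0(\boz)$ for each $t>0$. Since $V\ge0$ and $w_k T_t(w_k)\ge0$ pointwise, the potential term is nonnegative and can be dropped, yielding $\int_\boz|\nabla T_t(w_k)|^2\,dx\le t\|f_k-g_k\|_{L^1(\boz)}$. Applying Lemma~\ref{l3.1}(i) at level $t$, integrating the resulting pointwise inequality from $0$ to $\tau$, and using Cauchy--Schwarz together with the gradient bound just established, I would obtain $\tau[\mu_{w_k}(\tau)]^{(n-2)/n}\le C\|f_k-g_k\|_{L^1(\boz)}$ for every $\tau>0$, which is precisely the weak-$L^{n/(n-2)}(\boz)$ bound on $w_k$. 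Passing to the limit as $k\to\fz$ via the a.e.\ convergence $w_k\to u-v$ from Proposition~\ref{p2.1} and Fatou's lemma applied to the distribution function then yields~\eqref{3.2}.

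For~(ii), I would perform the same argument after the substitution $\tilde w_k:=w_k-\mathrm{med}(w_k)$, for which $\mathrm{med}(\tilde w_k)=0$ so that Lemma~\ref{l3.1}(ii) is applicable to $(\tilde w_k)_{\pm}$. Testing the equation for $w_k$ with $\varphi:=T_t(\tilde w_k)\in W^{1,2}(\boz)$, the potential term decomposes as $\int_\boz V\tilde w_k T_t(\tilde w_k)\,dx+\mathrm{med}(w_k)\int_\boz V T_t(\tilde w_k)\,dx$: the first summand is nonnegative (same sign), while the second must be absorbed by controlling $|\mathrm{med}(w_k)|$. For this I would first establish $\int_\boz V|w_k|\,dx\le\|f_k-g_k\|_{L^1(\boz)}$ by testing the equation with a smooth approximation of $\mathrm{sgn}(w_k)$, and then use this bound together with the defining property of the median and the assumption $V\not\equiv0$ on $\boz$ to bound $|\mathrm{med}(w_k)|$ by $C\|f_k-g_k\|_{L^1(\boz)}$, with $C$ depending on $V$ and $\boz$. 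The remaining steps---Lemma~\ref{l3.1}(ii) applied separately to $(\tilde w_k)_\pm$ with the corresponding gradient bounds obtained by testing with $T_t((\tilde w_k)_+)$ and $-T_t((\tilde w_k)_-)$, Cauchy--Schwarz, and the same passage to the limit as in~(i)---then produce~\eqref{3.3}.

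The main obstacle is precisely the Neumann case: unlike in~(i), where $V\ge0$ forces the potential term to have the favorable sign, the summand $\mathrm{med}(w_k)\int_\boz V T_t(\tilde w_k)\,dx$ arising in~(ii) need not be nonnegative, and an \emph{a priori} bound on $|\mathrm{med}(w_k)|$ that is uniform in $k$ is indispensable in order to absorb it into the $L^1$-norm of the data. The reverse-H\"older structure of $V$ enters exactly here, both in guaranteeing $\int_\boz V>0$ and in ensuring that the auxiliary $L^1$-estimate $\int_\boz V|w_k|\,dx\le\|f_k-g_k\|_{L^1(\boz)}$ translates effectively into a bound on $|\mathrm{med}(w_k)|$.
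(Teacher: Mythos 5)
Your plan is essentially sound and its skeleton (truncation, Maz'ya's Lemma \ref{l3.1} applied to the positive and negative parts, then approximation via Proposition \ref{p2.1} and Fatou) coincides with the paper's, but you treat the two delicate ingredients differently. The paper tests with the level-slab functions $\phi_{t,h}$, differentiates in $t$ to get the pointwise bound \eqref{3.7}, and controls the potential term by invoking Lemma \ref{l3.3} (boundedness of $VL^{-1}_\boz$ on $L^1(\boz)$, which rests on Shen's fundamental-solution estimates); you instead use the integrated truncation bound $\int_\boz|\nabla T_t(w_k)|^2\,dx\le Ct\|f_k-g_k\|_{L^1(\boz)}$ together with the Cauchy--Schwarz (Boccardo--Gallou\"et) form of Maz'ya's inequality, which works equally well, and you avoid Lemma \ref{l3.3} altogether: in (i) by the sign of $Vw_kT_t(w_k)$, and in (ii) by rederiving $\int_\boz V|w_k|\,dx\le\|f_k-g_k\|_{L^1(\boz)}$ through testing with $T_\epsilon(w_k)/\epsilon$, an elementary Br\'ezis--Strauss-type substitute for the paper's deeper input. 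Two cautions. First, your median estimate requires more than the positivity of $\int_\boz V$: you need $\inf\{\int_E V\,dx:\ E\subset\boz,\ |E|\ge|\boz|/2\}>0$, which does not follow from $V\not\equiv0$ alone; it is true here because $V\in RH_n(\rn)$ makes $V\,dx$ doubling, hence $V\in A_\infty(\rn)$ and $V>0$ almost everywhere, but this must be said explicitly (and the resulting constant depends on $V$ beyond $[V]_{RH_n(\rn)}$, much as the paper's constants do implicitly). In fact the median detour is unnecessary: once $\int_\boz V|w_k|\,dx\le\|f_k-g_k\|_{L^1(\boz)}$ is available, the full potential term satisfies $|\int_\boz Vw_kT_t(\tilde w_k)\,dx|\le t\int_\boz V|w_k|\,dx\le t\|f_k-g_k\|_{L^1(\boz)}$, which is exactly how \eqref{3.7} is used in the paper, so you need neither the sign splitting nor the bound on $\mathrm{med}(w_k)$. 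Second, in the passage to the limit for (ii) you implicitly compare $\mathrm{med}(w_k)$ with $\mathrm{med}(u-v)$; medians need not converge under a.e.\ convergence, so a short additional argument (uniform bound on $\mathrm{med}(w_k)$, extraction of a convergent subsequence, and adjustment of the constant) is required, though the paper is equally terse at this step.
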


To show Lemma \ref{l3.2}, we need the following conclusion,
which was established in \cite{y15} via using some estimates obtained
by Shen \cite{sh94,sh95} for the fundamental solution of Schr\"odinger equations.
The details are omitted here.

\begin{lemma}\label{l3.3}
Let $n\ge3$, $\boz\subset\rn$ be a bounded Lipschitz domain and $0\le V\in RH_n(\rn)$.
Denote by $L_\boz:=-\Delta+V$ the Schr\"odinger operator on $\boz$ with
the Dirichlet or the Neumann boundary condition.
Then the operator $VL^{-1}_\boz$ is bounded on $L^p(\boz)$ with $p\in[1,n]$.
\end{lemma}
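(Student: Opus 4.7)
The plan is to combine the Green function representation of $VL_\boz^{-1}$ with Shen's pointwise estimates on the fundamental solution of $-\Delta+V$ and then interpolate between the endpoints $p=1$ and $p=n$. Throughout, write $G_V(\cdot,\cdot)$ for the Green function of $L_\boz$ with the prescribed (Dirichlet or Neumann) boundary condition, so that
$$VL_\boz^{-1}f(x)=V(x)\int_\boz G_V(x,y)f(y)\,dy.$$

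For the $L^1$ endpoint I would apply Fubini together with the symmetry $G_V(x,y)=G_V(y,x)$, reducing the bound to showing that $w(y):=\int_\boz G_V(y,x)V(x)\,dx=L_\boz^{-1}V(y)$ is uniformly bounded on $\boz$. In the Neumann case the constant function $1$ satisfies $-\Delta 1+V\cdot 1=V$ together with the vanishing normal derivative on $\paz\boz$, so $w\equiv 1$. In the Dirichlet case a short computation shows that $L_\boz(1-w)=0$ in $\boz$ with boundary datum $1$, whence the maximum principle (using $V\ge 0$) yields $0\le w\le 1$. Either way $\|VL_\boz^{-1}f\|_{L^1(\boz)}\le\|f\|_{L^1(\boz)}$.

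For the $L^n$ endpoint I would invoke Shen's pointwise bound
$$G_V(x,y)\ls\frac{C_N}{(1+m(x,V)|x-y|)^{N}\,|x-y|^{n-2}}\qquad\text{for every }N\in\nn,$$
where $m(\cdot,V)$ is the auxiliary function associated with $V\in RH_n(\rn)$, after the standard argument that transfers Shen's $\rn$-estimates to $G_V$ on the bounded Lipschitz domain $\boz$ (for the Dirichlet case via the comparison $G_V\le\Gamma_V$ with $\Gamma_V$ the $\rn$-fundamental solution, for the Neumann case via an additional reflection/extension step). Coupling this decay with the reverse H\"older property $V\in RH_n(\rn)$ through a Schur test at the critical scale then gives $\|VL_\boz^{-1}f\|_{L^n(\boz)}\ls\|f\|_{L^n(\boz)}$; the exponent $n$ is precisely where the reverse H\"older weight balances the $|x-y|^{-(n-2)}$ Newtonian singularity. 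The Marcinkiewicz interpolation theorem applied to the two endpoints then delivers the claim for every $p\in[1,n]$.

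The principal obstacle is the $L^n$ endpoint. Shen's pointwise estimates are formulated on $\rn$, so their transfer to a bounded Lipschitz domain (uniformly across both the Dirichlet and the Neumann boundary conditions) demands careful control of the boundary contribution to $G_V$, together with tracking how $m(\cdot,V)$ interacts with the Lipschitz geometry of $\paz\boz$. Once this pointwise bound is in hand, the subsequent Schur-type manipulation is the only place where the hypothesis $V\in RH_n(\rn)$ is used in a sharp way, which matches the form of the stated conclusion.
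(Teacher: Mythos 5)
The paper does not actually prove this lemma: it is imported from \cite{y15}, where it is established by means of Shen's estimates \cite{sh94,sh95} for the fundamental solution and the Neumann function of $-\Delta+V$, so there is no internal argument to compare yours with step by step. Judged on its own, your outline (Green function representation, endpoint bounds at $p=1$ and $p=n$, interpolation) is reasonable, and your $L^1$ endpoint is essentially complete: with $G_V\ge0$ and symmetric, Fubini reduces the claim to $L_\boz^{-1}V\le1$, which your constant-function and maximum-principle argument delivers (and it can be run purely at the level of weak solutions by testing with $(w-1)_+$ and $w_-$).

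The genuine gap is at the $L^n$ endpoint. First, transferring Shen's whole-space pointwise bound, including the $(1+m(x,V)|x-y|)^{-N}$ decay, to the Neumann function of a general bounded Lipschitz domain ``via a reflection/extension step'' is not a valid move: a Lipschitz boundary admits no reflection compatible with the operator, and this boundary estimate is precisely the nontrivial input that \cite{y15} takes from Shen's Lipschitz-domain analysis in \cite{sh94} rather than from any extension argument. Second, the ``Schur test at the critical scale'' is asserted, not carried out, and as stated it is not a proof. In fact the sharp balance you describe is not needed on a bounded domain: once one has the crude bound $G_V(x,y)\ls|x-y|^{2-n}$ (for the Dirichlet problem this follows from $0\le G_V\le$ the free fundamental solution, by $V\ge0$ and the maximum principle; for the Neumann problem it must be taken from \cite{sh94} or \cite{y15}), the estimate
\[
\sup_{x\in\boz}\int_\boz|x-y|^{2-n}|f(y)|\,dy\le\|f\|_{L^n(\boz)}\sup_{x\in\boz}
\lf\{\int_\boz|x-y|^{(2-n)n'}\,dy\r\}^{\frac1{n'}}\ls\|f\|_{L^n(\boz)},
\]
valid because $(n-2)n'<n$, gives
$\|VL_\boz^{-1}f\|_{L^n(\boz)}\le\|V\|_{L^n(\boz)}\sup_{x\in\boz}\int_\boz G_V(x,y)|f(y)|\,dy
\ls\|f\|_{L^n(\boz)}$, since $V\in RH_n(\rn)$ implies $V\in L^n_{\loc}(\rn)$. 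So the $m(\cdot,V)$ decay and the claimed critical balancing are red herrings; the actual crux, which your sketch does not supply, is the kernel bound for the Neumann function on a Lipschitz domain, and there you must quote or reprove Shen's domain estimates instead of reflecting.
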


Now we prove Lemma \ref{l3.2} by using Lemmas \ref{l3.1} and \ref{l3.3}.

\begin{proof}[Proof of Lemma \ref{l3.2}]
Here we only show \eqref{3.3}, since the proof of \eqref{3.2} is similar, the details being omitted.
We first assume that $f,\,g\in L^2(\boz)$, $u$ is the weak solution to \eqref{1.3} and
$v$ the weak solution to \eqref{1.3} with $f$ replaced by $g$. Let
\begin{align}\label{3.4}
w:=(u-v)-\mathrm{med}(u-v).
\end{align}
Moreover, for any given $t,\,h\in(0,\fz)$ and $x\in\boz$, let
\begin{align*}
\phi_{t,h}(x):=\begin{cases}
0\ \ \ \ \ \ \ \ \ \ \ \ &\text{if}\ w(x)\le t,\\
w(x)-t\ \ \ &\text{if}\ t<w(x)\le t+h,\\
h\ \ \ \ \ \ \ \ \ \ \ \ &\text{if}\ w(x)>t+h.
\end{cases}
\end{align*}
Then $\phi_{t,h}\in W^{1,2}(\boz)$ and, by \eqref{1.2}, we find that
$$
\int_\boz\nabla(u-v)(x)\cdot\nabla \phi_{t,h}(x)\,dx+\int_\boz V(x)(u-v)(x)\phi_{t,h}(x)\,dx=\int_\boz (f-g)(x)\phi_{t,h}(x)\,dx,
$$
which, combined with the definition of $\phi_{t,h}$ and \eqref{3.4}, implies that
\begin{align}\label{3.5}
&\frac{1}{h}\int_{\{t<w\le t+h\}}|\nabla(u-v)(x)|^2\,dx\\ \nonumber
&\hs=\int_{\{w>t+h\}}(f-g)(x)\,dx
+\frac{1}{h}\int_{\{t<w\le t+h\}}(f-g)(x)[w(x)-t]\,dx\\ \nonumber
&\hs\hs-\int_{\{w>t+h\}}V(x)(u-v)(x)\,dx-
\frac{1}{h}\int_{\{t<w\le t+h\}}V(x)(u-v)(x)[w(x)-t]\,dx.
\end{align}
Letting $h\to0^+$ in \eqref{3.5} and using the Lebesgue dominated
convergence theorem, we conclude that, for almost every $t\in(0,\fz)$,
\begin{align}\label{3.6}
-\frac{d}{dt}\int_{\{w>t\}}|\nabla w(x)|^2\,dx=\int_{\{w>t\}}(f-g)(x)\,dx-\int_{\{w>t\}}V(x)(u-v)(x)\,dx.
\end{align}
Moreover, from Lemma \ref{l3.3}, we deduce that $\|V(u-v)\|_{L^1(\boz)}\ls\|f-g\|_{L^1(\boz)}$,
which, together with \eqref{3.6}, further implies that, for almost every $t\in(0,\fz)$,
\begin{align}\label{3.7}
-\frac{d}{dt}\int_{\{w>t\}}|\nabla w(x)|^2\,dx\ls\|f-g\|_{L^1(\boz)}.
\end{align}
Then, by \eqref{3.7} and Lemma \ref{l3.1}(ii), we know that, for almost every $t\in(0,\fz)$,
\begin{align*}
1\ls-\mu'_{w_+}(t)[\mu_{w_+}(t)]^{-\frac2{n'}}\|f-g\|_{L^1(\boz)},
\end{align*}
which further implies that, for any $t\in(0,\fz)$,
\begin{align*}
t\ls\|f-g\|_{L^1(\boz)}\int_0^t[-\mu'_{w_+}(s)][\mu_{w_+}(s)]^{-\frac2{n'}}\,ds\ls
\|f-g\|_{L^1(\boz)}[\mu_{w_+}(t)]^{\frac{2-n}n}.
\end{align*}
From this and the definition of the decreasing rearrangement, it follows that,
for any $t\in(0,|\boz|/2)$,
\begin{align*}
(w_+)^\ast(t)\ls t^{\frac{2-n}n}\|f-g\|_{L^1(\boz)},
\end{align*}
which further implies that
\begin{align}\label{3.8}
\lf\|(w_+)^\ast\r\|_{L^{\frac{n}{n-2},\fz}(\boz)}\ls \|f-g\|_{L^1(\boz)}.
\end{align}
Repeating the proof of \eqref{3.8}, we find that the estimate in
\eqref{3.8} also holds true for $w_-$, which, together with \eqref{3.8},
implies that \eqref{3.3} holds true in the case that $f,\,g\in L^2(\boz)$.

Let $f,\,g\in L^1(\boz)$. Then, by the definition of the approximable solution,
the Fatou lemma and the fact that \eqref{3.3} holds true in the case that $f,\,g\in L^2(\boz)$,
we conclude that \eqref{3.3} also holds true in this case. This finishes the proof of Lemma \ref{l3.2}.
\end{proof}

Now we show Proposition \ref{p3.2} via Lemma \ref{l3.2}.

\begin{proof}[Proof of Proposition \ref{p3.2}]
We prove this proposition following
the method used in \cite{acmm10,am08,cm14b}.
We first assume that $u$ and $v$ are the solutions to the Dirichlet problem \eqref{1.1}.

Let $w:=u-v$. For any given non-negative integrable function $\psi:\,(0,|\boz|]\to[0,\fz)$
and $t\in[0,|\boz|]$, let
$\Psi(t):=\int_0^t\psi(s)\,ds.$
Furthermore, for any given $r\in[0,|\boz|]$, the function $I$ is defined by setting, for any $t\in[0,|\boz|]$,
\begin{align*}
I_r(t):=\begin{cases}
\Psi(t)\ \ \  &\text{if}\ t\in[0,r],\\
\Psi(r)\ \ \  &\text{if}\ t\in(r,|\boz|].
\end{cases}
\end{align*}
Moreover, the function $\phi$ is defined by setting, for any $x\in\boz$,
$$\phi(x):=\int_0^{w_+(x)}I_r(\mu_{w_+}(t))\,dt.
$$
By the definition of the function $I_r$, we know that the composite function
$I_r\circ\mu_{w_+}$ is bounded, which, together with $w\in W^{1,2}_0(\boz)$ and the chain
rule for derivatives in Sobolev spaces, implies that $\phi\in W^{1,2}_0(\boz)$ and
\begin{align*}
\nabla\phi=\chi_{\{w>0\}}I_r(\mu_{w_+}(w_+))\nabla(u-v)
\end{align*}
holds true almost everywhere in $\boz$. Taking $\phi$
as a test function of \eqref{1.2}, we conclude that
\begin{align}\label{3.9}
&\int_{\{w>0\}}I_r(\mu_{w_+}(w_+(x)))|\nabla(u-v)(x)|^2\,dx+\int_\boz V(x)(u-v)(x)\phi(x)\,dx\\ \nonumber
&\hs=\int_\boz(f-g)(x)\phi(x)\,dx.
\end{align}

Furthermore, it follows, from the definition of the decreasing rearrangement, that,
for any $s\in(0,|\boz|)$,
\begin{align}\label{3.10}
\mu_{w_+}((w_+)^\ast(s))\le s
\end{align}
(see, for example, \cite[(7.1.8)]{pkjf13}), which implies that,
if $\mu_{w_+}(t)=r$, then $t\le(w_+)^\ast(r)$. By this and the definitions of $\phi$ and
$I_r$, we conclude that
\begin{align*}
\|\phi\|_{L^\fz(\boz)}&\le\int_0^\fz I_r(\mu_{w_+}(t))\,dt\le\int_{(w_+)^\ast(r)}^\fz\Psi(\mu_{w_+}(t))\,dt
+\int_0^{(w_+)^\ast(r)}\Psi(r)\,dt\\ \nonumber
&=\int_{(w_+)^\ast(r)}^\fz\int_0^{\mu_{w_+}(t)}\psi(s)\,ds\,dt+\Psi(r)(w_+)^\ast(r)\\ \nonumber
&=\int_0^r[(w_+)^\ast(s)-(w_+)^\ast(r)]\psi(s)\,ds+\Psi(r)(w_+)^\ast(r)
=\int_0^r(w_+)^\ast(s)\psi(s)\,ds,
\end{align*}
which, combined with \eqref{3.2} and the definition of $L^{n/(n-2),\fz}(\boz)$, further implies that
\begin{align}\label{3.11}
\|\phi\|_{L^\fz(\boz)}\ls\|f-g\|_{L^1(\boz)}\int_0^r\psi(s)s^{-\frac{n-2}n}\,ds.
\end{align}

Recall that, for any measurable function $u$ on $\boz$,
the \emph{increasing rearrangement} $u_\ast:\ [0,|\boz|]\to[0,\fz]$ of $u$ is defined by setting, for any
$s\in[0,|\boz|]$, $u_\ast(s):=u^\ast(|\boz|-s)$.
Then the Hardy-Littlewood inequality states that, for any measurable functions $u$ and $v$ on $\boz$,
\begin{align}\label{3.12}
\int_0^{|\boz|}u^\ast(s)v_\ast(s)\,ds\le\int_{\boz}|u(x)v(x)|\,dx\le\int_0^{|\boz|} u^\ast(s)v^{\ast}(s)\,ds
\end{align}
(see, for example, \cite[Theorem 2.2]{bs88}).
Moreover, from \eqref{3.10} and the facts that $w_+$ and $(w_+)^\ast$
are equivalently distributed functions and $I_r$ is increasing,
we deduce that, for any $t\in(0,|\boz|)$,
$$(I_r\circ\mu_{w_+}\circ w_+)_\ast(t)=(I_r\circ\mu_{w_+}\circ(w_+)^\ast)_\ast(t)\ge (I_r)_\ast(t)=I_r(t)
$$
(see also \cite[(4.30)]{acmm10}), which, together
with \eqref{3.12}, implies that
\begin{align*}
&\int_{\{w>0\}}I_r(\mu_{w_+}(w_+(x)))|\nabla(u-v)(x)|^2\,dx\\ \nonumber
&\hs\gs\int_0^{|\boz|}[|\nabla w_+|^\ast(t)]^2
(I_r\circ\mu_{w_+}\circ(w_+)^\ast)_\ast(t)\,dt\\ \nonumber
&\hs\gs\int_0^{|\boz|}[|\nabla w_+|^\ast(t)]^2I_r(t)\,dt\gs\int_0^r
[|\nabla w_+|^\ast(t)]^2I_r(t)\,dt\\ \nonumber
&\hs\gs[|\nabla w_+|^\ast(r)]^2\int_0^r\int_0^t\psi(s)\,ds\,dt\sim
[|\nabla w_+|^\ast(r)]^2\int_0^r\psi(s)(r-s)\,ds.
\end{align*}
By this, \eqref{3.9}, \eqref{3.11} and Lemma \ref{l3.3}, we conclude that
\begin{align*}
&[|\nabla w_+|^\ast(r)]^2\int_0^r\psi(s)(r-s)\,ds\\ \nonumber
&\hs\ls\|f-g\|^2_{L^1(\boz)}\int_0^r\psi(s)s^{-\frac{n-2}n}\,ds\\ \nonumber
&\hs\hs+\|V(u-v)\|_{L^1(\boz)}\|f-g\|_{L^1(\boz)}\int_0^r\psi(s)s^{-\frac{n-2}n}\,ds\\ \nonumber
&\hs\ls\|f-g\|^2_{L^1(\boz)}\int_0^r\psi(s)s^{-\frac{n-2}n}\,ds,
\end{align*}
which implies that, for any $r\in(0,|\boz|]$,
\begin{align}\label{3.13}
[|\nabla w_+|^\ast(r)]^2\sup_{\psi}\dfrac{\int_0^r\psi(s)(r-s)\,ds}
{\int_0^r\psi(s)s^{-(n-2)/n}\,ds}\ls\|f-g\|^2_{L^1(\boz)},
\end{align}
where the supremum is taken over all non-negative integrable functions $\psi$ on the interval $(0,|\boz|]$.
Moreover, it is easy to see that, for any $r\in(0,|\boz|]$,
$$\int_0^r(r-s)\,ds\sim r^{\frac{2}{n'}}\int_0^rs^{-\frac{n-2}n}\,ds,
$$
which further implies that
$$\sup_{\psi}\dfrac{\int_0^r\psi(s)(r-s)\,ds}{\int_0^r\psi(s)s^{-(n-2)/n}\,ds}\gs r^{\frac{2}{n'}}.
$$
From this and \eqref{3.13},  it follows that, for any $r\in(0,|\boz|]$,
\begin{align*}
|\nabla w_+|^\ast(r)r^{\frac{1}{n'}}\ls\|f-g\|_{L^1(\boz)}
\end{align*}
and hence
\begin{align}\label{3.14}
\|\nabla w_+\|_{L^{n',\fz}(\boz)}\ls\|f-g\|_{L^1(\boz)}.
\end{align}
Repeating the proof of \eqref{3.14} with replacing $w_+$ by $w_-$, we find that
the estimate in \eqref{3.14} also holds true for $w_-$, which, together with \eqref{3.14},
implies that \eqref{3.1} holds true for $w$.
This finishes the proof for the case of the Dirichlet problem \eqref{1.1}.

Now we assume that $u$ and $v$ are the solutions to the Neumann problem \eqref{1.3}.
In this case, let $w:=(u-v)-\mathrm{med}(u-v)$.
Then, by repeating the proof of \eqref{3.14} with replacing \eqref{3.2} with \eqref{3.3},
we know that $\|\nabla w_\pm\|_{L^{n',\fz}(\boz)}\ls\|f-g\|_{L^1(\boz)}$,
which implies that \eqref{3.1} holds true in this case.
This finishes the proof of Proposition \ref{p3.2}.
\end{proof}

To give the proof of Theorem \ref{t1.1}, we need the following notion of the $K$-functional. We
begin with the quasi-normed function space.
Let $\boz\subset\rn$ be a bounded domain.
A \emph{quasi-normed function space} $X(\boz)$ on $\boz$ is defined to be
a linear space of all measurable functions on
$\boz$ equipped with a quasi-norm having the following properties:
\begin{itemize}
  \item[(i)] $\|u\|_{X(\boz)}>0$ if $u\not\equiv0$;
  for any $\lz\in\rr$ and $u\in X(\boz)$, $\|\lz u\|_{X(\boz)}=|\lz|\|u\|_{X(\boz)}$;
  there exists a positive constant $C\in[1,\fz)$ such that, for any $u,\,v\in X(\boz)$,
  $$\|u+v\|_{X(\boz)}\le C[\|u\|_{X(\boz)}+\|v\|_{X(\boz)}];$$
  \item[(ii)] $0\le v\le u$ almost everywhere implies that $\|v\|_{X(\boz)}\le\|u\|_{X(\boz)}$;
  \item[(iii)] $0\le u_k\uparrow u$ almost everywhere as $k\to\fz$ implies that $\|u_k\|_{X(\boz)}\uparrow\|u\|_{X(\boz)}$
  as $k\to\fz$;
  \item [(iv)] if $E\subset\boz$ is measurable, then $\|\chi_E\|_{X(\boz)}<\fz$
  and there exists a positive constant
  $C$ such that, for any $u\in X(\boz)$, $\int_E |u(x)|\,dx\le C\|u\|_{X(\boz)}$.
\end{itemize}
Moreover, the space $X(\boz)$ is called a \emph{Banach function space} if (i) holds true with $C=1$.

Let $X_1(\boz)$ and $X_2(\boz)$ be two quasi-normed function spaces on $\boz$. Then, for any $s\in(0,\fz)$
and $u\in X_1(\boz)+X_2(\boz)$, the \emph{$K$-functional} $K(s,u;X_1(\boz),X_2(\boz))$ is defined as
$$K(s,u;X_1(\boz),X_2(\boz)):=\inf\{\|u_1\|_{X_1(\boz)}+s\|u_2\|_{X_2(\boz)}:\ u=u_1+u_2\}.
$$
Let $m\in\nn$. Then, for any $s\in(0,\fz)$ and vector-valued measurable function $U:\ \boz\to \rr^m$ such that
$U\in[X_1(\boz)]^m+[X_2(\boz)]^m$, the \emph{$K$-functional}
$K(s,U;[X_1(\boz)]^m,[X_2(\boz)]^m)$ is defined by setting
$$K(s,U;[X_1(\boz)]^m,[X_2(\boz)]^m):=\inf\{\|U_1\|_{X_1(\boz)}+s\|U_2\|_{X_2(\boz)}:\ U=U_1+U_2\}.
$$
It is easy to see that, for any $s\in(0,\fz)$ and $U\in[X_1(\boz)]^m+[X_2(\boz)]^m$,
\begin{align}\label{3.15}
K(s,|U|;X_1(\boz),X_2(\boz))\sim K(s,U;[X_1(\boz)]^m,[X_2(\boz)]^m).
\end{align}
We point out that the $K$-functional was introduced by Peetre \cite{p63}
in the constructions of families of intermediate function spaces
(see, for example, \cite[Chapter 5]{bs88} for the details about the $K$-functional).

We now prove Theorem \ref{t1.1} by using Propositions \ref{p3.1}
and \ref{p3.2} and the method of the $K$-functional.

\begin{proof}[Proof of Theorem \ref{t1.1}]
We only give out the proof for the case of the Neumann problem,
since the proof for the Dirichlet problem is similar, the details being omitted.

Let
$$T:\ L^1(\boz)\to\lf[L^{n',\fz}(\boz)\r]^n,\ \ Tf:=\nabla u,
$$
where, for any $f\in L^1(\boz)$, $u$ is the weak solution of the Neumann problem
\eqref{1.3}. Fix $f\in L^1(\boz)$. Then, for any decomposition $f=f_0+f_1$ with $f_0\in L^{n,1}(\boz)$
and $f_1\in L^1(\boz)$, we have the decomposition
\begin{align*}
Tf=Tf_0+(Tf-Tf_0).
\end{align*}
By the definition of the $K$-functional and Propositions \ref{p3.1} and \ref{p3.2}, we know that,
for any $s\in(0,\fz)$,
\begin{align*}
&K\lf(s,Tf;\lf[L^{n',\fz}(\boz)\r]^n,\lf[L^\fz(\boz)\r]^n\r) \\ \nonumber
&\hs\le\|Tf-Tf_0\|_{L^{n',\fz}(\boz)}+s\|Tf_0\|_{L^\fz(\boz)}\\ \nonumber
&\hs\ls\|f-f_0\|_{L^1(\boz)}+s\|f_0\|_{L^{n,1}(\boz)}\sim\|f_1\|_{L^1(\boz)}+s\|f_0\|_{L^{n,1}(\boz)},
\end{align*}
which further implies that, for any $s\in(0,\fz)$,
\begin{align}\label{3.16}
K\lf(s,Tf;\lf[L^{n',\fz}(\boz)\r]^n,\lf[L^\fz(\boz)\r]^n\r)\ls
K\lf(s,f;L^1(\boz),L^{n,1}(\boz)\r).
\end{align}
Moreover, from \eqref{3.15} and \cite[(4.8)]{h70}, we deduce that, for any $s\in(0,\fz)$,
\begin{align*}
K\lf(s,Tf;\lf[L^{n',\fz}(\boz)\r]^n,\lf[L^\fz(\boz)\r]^n\r)&\sim
K\lf(s,|Tf|;L^{n',\fz}(\boz),L^\fz(\boz)\r)\\ \nonumber
&\sim\sup_{t\in(0,s^{n/(n-1)}]}\lf\{t^{\frac{1}{n'}}(Tf)^\ast(t)\r\}
\end{align*}
and hence
\begin{align}\label{3.17}
K\lf(s^{\frac{1}{n'}},Tf;\lf[L^{n',\fz}(\boz)\r]^n,\lf[L^\fz(\boz)\r]^n\r)
\sim\sup_{t\in(0,s]}\lf\{t^{\frac{1}{n'}}(Tf)^\ast(t)\r\}.
\end{align}

Furthermore, by \cite[Theorem 4.2]{h70}, we conclude that, for any $s\in(0,\fz)$ and $f\in L^1(\boz)$,
\begin{align*}
K\lf(s,f;L^1(\boz),L^{n,1}(\boz)\r)\sim\int_0^{s^{n'}}f^\ast(t)\,dt
+s\int_{s^{n'}}^{\fz}f^\ast(t)t^{-\frac1{n'}}\,dt,
\end{align*}
which, together with \eqref{3.16} and \eqref{3.17}, implies that, for any $s\in(0,\fz)$,
\begin{align*}
|\nabla u|^\ast(s)&=(Tf)^\ast(s)\ls
s^{-\frac{1}{n'}}K\lf(s^{\frac{1}{n'}},Tf;\lf[L^{n',\fz}(\boz)\r]^n,\lf[L^\fz(\boz)\r]^n\r)\\
&\ls s^{-\frac{1}{n'}}\lf\{\int_0^{s}f^\ast(t)\,dt+s^{\frac1{n'}}
\int_{s}^{\fz}f^\ast(t)t^{-\frac1{n'}}\,dt\r\}\\
&\sim s^{-\frac1{n'}}\int_0^sf^\ast(t)\,dt+\int_s^{\fz}f^\ast(t)t^{-\frac1{n'}}\,dt.
\end{align*}
From this and the fact that, for any $s\in[|\boz|,\fz)$, $f^\ast(s)=0=|\nabla u|^\ast(s)$,
it follows that, for any $s\in(0,|\boz|)$,
$$|\nabla u|^\ast(s)\ls s^{-\frac1{n'}}\int_0^sf^\ast(t)\,dt+\int_s^{|\boz|}f^\ast(t)t^{-\frac1{n'}}\,dt,
$$
which completes the proof of Theorem \ref{t1.1}.
\end{proof}

\section{Proof of Theorem \ref{t1.2}\label{s4}}
\hskip\parindent In this section, we give out the proof of Theorem \ref{t1.2}.
We first recall some necessary notation.

A quasi-normed function space $X(\boz)$ is said to be \emph{rearrangement invariant} if there exists
a quasi-normed function space $\overline{X}(0,|\boz|)$ on the interval $(0,|\boz|)$,
called the \emph{representation space} of $X(\boz)$, having the property that, for any $u\in X(\boz)$,
$\|u\|_{X(\boz)}=\|u^\ast\|_{\overline{X}(0,|\boz|)}.$
Obviously, if $X(\boz)$ is a rearrangement invariant and $u^\ast=v^\ast$, then
$\|u\|_{X(\boz)}=\|v\|_{X(\boz)}$.

We point out that the classical (weighted) Lebesgue space, (weighted) Lorentz space,
(weighted) Lorentz-Zygmund space and Orlicz space are all rearrangement invariant
quasi-normed function spaces (see, for example, \cite{bs88}).

As a corollary of Theorem \ref{t1.1}, we have the following general criterion for
the rearrangement invariant quasi-normed function space.

\begin{lemma}\label{l4.1}
Let $n\ge3$, $\boz$ be a bounded domain in $\rn$,
$0\le V\in RH_n(\rn)$ and $V\not\equiv0$ on $\boz$.
Assume that $\partial\boz\in W^2L^{n-1,\,1}$ or $\boz$ is semi-convex.
Let $X(\boz)$ and $Y(\boz)$ be rearrangement invariant quasi-normed spaces on $\boz$,
$\overline{X}(0,|\boz|)$ and $\overline{Y}(0,|\boz|)$ be the representation spaces of
$X(\boz)$, respectively, $Y(\boz)$.
Assume that $f\in X(\boz)$ and there exists a positive constant $C$ such that, for any $s\in(0,|\boz|)$,
\begin{align}\label{4.1}
\lf\|s^{-\frac1{n'}}\int_0^sf^\ast(t)\,dt\r\|_{\overline{Y}(0,|\boz|)}\le
C\|f^\ast\|_{\overline{X}(0,|\boz|)}
\end{align}
and
\begin{align}\label{4.2}
\lf\|\int_s^{|\boz|}f^\ast(t)t^{-\frac1{n'}}\,dt\r\|_{\overline{Y}(0,|\boz|)}\le C\|f^\ast\|_{\overline{X}(0,|\boz|)}.
\end{align}
If $u$ is a weak solution to the Dirichlet
problem \eqref{1.1} or the Neumann problem \eqref{1.3},
then there exists a positive constant $C$, depending on $n$, $[V]_{RH_n(\rn)}$ and $\boz$, such that
\begin{align*}
\|\nabla u\|_{Y(\boz)}\le C\|f\|_{X(\boz)}.
\end{align*}
\end{lemma}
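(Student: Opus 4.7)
The plan is to reduce Lemma \ref{l4.1} essentially to a direct application of Theorem \ref{t1.1}, combined with the defining property of rearrangement invariant spaces and the hypotheses \eqref{4.1} and \eqref{4.2}. The role of the two Hardy-type inequalities is precisely to transfer the pointwise rearrangement bound \eqref{1.4} into a norm inequality in $Y(\boz)$, so almost all the analytic work has already been done.

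First I would observe that, since $X(\boz)$ is a quasi-normed function space, property (iv) of the definition guarantees $f\in L^1(\boz)$ whenever $f\in X(\boz)$, so Theorem \ref{t1.1} is applicable. It produces the pointwise estimate
\begin{align*}
|\nabla u|^\ast(s)\ls s^{-\frac{1}{n'}}\int_0^s f^\ast(r)\,dr+\int_s^{|\boz|}f^\ast(r)r^{-\frac{1}{n'}}\,dr
\end{align*}
for every $s\in(0,|\boz|)$, with implicit constant depending only on $n$, $[V]_{RH_n(\rn)}$ and $\boz$.

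Next I would invoke the definition of the representation space: since $Y(\boz)$ is rearrangement invariant, $\|\nabla u\|_{Y(\boz)}=\||\nabla u|^\ast\|_{\overline{Y}(0,|\boz|)}$. Using the monotonicity of the quasi-norm on $\overline{Y}(0,|\boz|)$ (property (ii)) together with the quasi-triangle inequality (property (i)), the pointwise bound above yields
\begin{align*}
\|\nabla u\|_{Y(\boz)}\ls\lf\|s^{-\frac1{n'}}\int_0^s f^\ast(r)\,dr\r\|_{\overline{Y}(0,|\boz|)}+\lf\|\int_s^{|\boz|}f^\ast(r)r^{-\frac1{n'}}\,dr\r\|_{\overline{Y}(0,|\boz|)}.
\end{align*}
At this stage I would apply the hypotheses \eqref{4.1} and \eqref{4.2} directly to each of the two terms to control them by $\|f^\ast\|_{\overline{X}(0,|\boz|)}$. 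Finally, using rearrangement invariance of $X(\boz)$ once more, $\|f^\ast\|_{\overline{X}(0,|\boz|)}=\|f\|_{X(\boz)}$, and the desired inequality $\|\nabla u\|_{Y(\boz)}\ls\|f\|_{X(\boz)}$ follows.

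There is no real obstacle here, as Theorem \ref{t1.1} already encodes the hard PDE content; the proof is essentially a bookkeeping argument built on top of it. The only minor subtlety is to be explicit about how the quasi-triangle constant enters the final implicit constant, and to verify at the outset that $f\in L^1(\boz)$, so that the approximable-solution machinery behind Theorem \ref{t1.1} is legitimately in force. Once these points are noted, the argument is complete in a few lines.
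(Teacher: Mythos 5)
Your proposal is correct and follows essentially the same route as the paper: both first note that $f\in X(\boz)\subset L^1(\boz)$ so that Theorem \ref{t1.1} applies, then use rearrangement invariance of $Y(\boz)$ together with the lattice and quasi-triangle properties of $\overline{Y}(0,|\boz|)$ to pass from the pointwise bound \eqref{1.4} to the norm bound, and finally invoke \eqref{4.1}, \eqref{4.2} and the rearrangement invariance of $X(\boz)$. No discrepancies to report.
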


\begin{proof}
By the definition of $X(\boz)$, we know that $X(\boz)\subset L^1(\boz)$,
which, together with $f\in X(\boz)$, implies that $f\in L^1(\boz)$.
From this, Theorem \ref{t1.1}, \eqref{4.1} and \eqref{4.2}, it follows that
\begin{align*}
\|\nabla u\|_{Y(\boz)}&=\||\nabla u|^\ast\|_{\overline{Y}(0,|\boz|)}\ls
\lf\|s^{-\frac1{n'}}\int_0^sf^\ast(t)\,dt\r\|_{\overline{Y}(0,|\boz|)}+
\lf\|\int_s^{|\boz|}f^\ast(t)t^{-\frac1{n'}}\,dt\r\|_{\overline{Y}(0,|\boz|)}\\
&\ls\|f^\ast\|_{\overline{X}(0,|\boz|)}\sim\|f\|_{X(\boz)}.
\end{align*}
This finishes the proof of Lemma \ref{l4.1}.
\end{proof}

Moreover, to prove Theorem \ref{t1.2}, we need the following weighted Hardy type inequality in
Lebesgue spaces. More precisely, Lemma \ref{l4.2} was
established in \cite[Section 1.3.2, Theorems 2 and 3]{m11}
and Lemma \ref{l4.3} was obtained in \cite[Proposition 2.9]{cs93}.

\begin{lemma}\label{l4.2}
Let $\boz\subset\rn$ be a bounded domain,
$w,\,v$ two non-negative Borel functions on $(0,|\boz|]$ and $1\le p\le q\le\fz$.

{\rm(i)} The inequality
$$\lf\{\int_0^{|\boz|}\lf|w(s)\int_0^sf(t)\,dt\r|^q\,ds\r\}^{\frac1q}\le C\lf\{\int_0^{|\boz|}|v(s)f(s)|^p\,ds\r\}^{\frac1p}
$$
holds true for any Borel function $f$ on $(0,|\boz|]$, where $C$ is a positive
constant independent of $f$, if and only if
\begin{align}\label{4.3}
\sup_{r\in(0,|\boz|]}\lf[\int_r^{|\boz|}[w(x)]^q\,dx\r]^{\frac1q}
\lf\{\int_0^r[v(x)]^{-p'}\,dx\r\}^{\frac1{p'}}<\fz.
\end{align}

{\rm(ii)} The inequality
\begin{align*}
\lf\{\int_0^{|\boz|}\lf|w(s)\int_s^{|\boz|} f(t)\,dt\r|^q\,ds\r\}^{\frac1q}\le
C\lf\{\int_0^{|\boz|}|v(s)f(s)|^p\,ds\r\}^{\frac1p}
\end{align*}
holds true for any Borel function $f$ on $(0,|\boz|]$, where $C$ is a positive
constant independent of $f$, if and only if
\begin{align}\label{4.4}
\sup_{r\in(0,|\boz|]}\lf\{\int_0^r[w(x)]^q\,dx\r\}^{\frac1q}
\lf\{\int_r^{|\boz|}[v(x)]^{-p'}\,dx\r\}^{\frac1{p'}}<\fz.
\end{align}

\end{lemma}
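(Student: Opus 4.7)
The plan is to recognize Lemma~\ref{l4.2} as the classical Muckenhoupt--Bradley weighted Hardy inequality on a bounded interval. Parts (i) and (ii) are exchanged by the reflection $s\mapsto|\boz|-s$ (which also swaps the roles of $w$ and $v$ with their reflected counterparts and converts \eqref{4.3} into \eqref{4.4}), so it suffices to establish part (i) and obtain part (ii) by the same argument applied on the reversed interval.

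The necessity of \eqref{4.3} is a straightforward test-function argument. I would insert the one-parameter family
\[
f_r(t):=[v(t)]^{-p'}\chi_{(0,r]}(t),\qquad r\in(0,|\boz|],
\]
and use the identity $p(1-p')=-p'$ to compute $|v(s)f_r(s)|^p=[v(s)]^{-p'}\chi_{(0,r]}(s)$, so that the right-hand side of the inequality equals $(\int_0^r v^{-p'})^{1/p}$. Simultaneously, for $s\ge r$ one has $\int_0^s f_r\,dt=\int_0^r v^{-p'}$, giving the lower bound $(\int_r^{|\boz|}w^q)^{1/q}\int_0^r v^{-p'}$ on the left-hand side. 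Dividing by $(\int_0^r v^{-p'})^{1/p}$ and taking the supremum over $r\in(0,|\boz|]$ produces exactly \eqref{4.3}.

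For the sufficiency, I would run the standard dyadic decomposition. Set $\Phi(r):=\int_0^r v^{-p'}$ (assume $\Phi$ absolutely continuous, strictly increasing and finite after a routine truncation/approximation, removed at the end by monotone convergence), pick $\{r_k\}$ with $\Phi(r_k)=2^k$, and split $(0,|\boz|]=\bigsqcup_k(r_k,r_{k+1}]$. On each block apply H\"older to
\[
\int_0^s f(t)\,dt=\sum_{j\le k}\int_{r_j}^{r_{j+1}}[v(t)f(t)]\,v(t)^{-1}\,dt\le\sum_{j\le k}b_j^{1/p}\,2^{j/p'},
\]
where $b_j:=\int_{r_j}^{r_{j+1}}|vf|^p$. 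Use the hypothesis to bound $\int_{r_k}^{r_{k+1}}w^q\le A^q 2^{-kq/p'}$, where $A$ denotes the supremum in \eqref{4.3}. The resulting estimate expresses a certain rescaling of the left-hand side as a discrete convolution of $\{b_j^{1/p}\}$ with the summable geometric kernel $\{2^{-\ell/p'}\}_{\ell\ge0}$; by Young's inequality for sequences its $\ell^q$-norm is dominated by a constant times $\|\{b_j^{1/p}\}\|_{\ell^q}$, and the embedding $\ell^p\hookrightarrow\ell^q$ (valid since $p\le q$) further reduces this to $(\sum_j b_j)^{1/p}=\|vf\|_{L^p}$, producing the desired inequality with constant comparable to $A$.

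The main technical obstacles are the endpoint cases. When $p=1$ the exponent $p'=\fz$ forces $\int_0^r v^{-p'}$ to be reinterpreted as $\esup_{(0,r)} v^{-1}$, and the dyadic scheme collapses into a direct telescoping estimate. When $q=\fz$ the outer $L^q$-norm becomes an essential supremum and Young's step must be replaced by a pointwise application of the hypothesis. Finally, pathological situations where $\Phi$ vanishes on an initial interval, is discontinuous, or becomes infinite are handled by truncating $v^{-p'}$ between $\varepsilon$ and $1/\varepsilon$ and letting $\varepsilon\to0^+$; one then verifies that the best constant $C$ in the inequality is indeed equivalent to the supremum in \eqref{4.3}.
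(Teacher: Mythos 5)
Your argument is correct, but it is worth noting that the paper does not prove this lemma at all: it is quoted verbatim from Maz'ya's book (Section 1.3.2, Theorems 2 and 3 of the cited monograph), so the "paper's proof" is a one-line citation. What you have written out is essentially the classical Muckenhoupt--Bradley proof that underlies that reference: necessity via the test functions $f_r=v^{-p'}\chi_{(0,r]}$ (using $p(1-p')=-p'$), and sufficiency via the dyadic level decomposition $\Phi(r_k)=2^k$ of $\Phi(r)=\int_0^r v^{-p'}$, H\"older on each block, the hypothesis in the form $\bigl(\int_{r_k}^{|\Omega|}w^q\bigr)^{1/q}\le A\,2^{-k/p'}$, Young's convolution inequality for sequences with the geometric kernel $2^{-\ell/p'}$, and the embedding $\ell^p\hookrightarrow\ell^q$ for $p\le q$; part (ii) then follows from part (i) by the reflection $s\mapsto|\Omega|-s$. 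All of these steps are sound, and your treatment of the endpoints ($p=1$, where $\int_0^r v^{-p'}$ must be read as an essential supremum of $v^{-1}$ and the convolution step degenerates; $q=\infty$, where the outer norm is an essential supremum; and the degenerate or infinite cases of $\Phi$, handled by truncation and monotone convergence) is the standard and appropriate remedy. The trade-off is simply self-containedness versus economy: the paper delegates this well-known characterization of the weighted Hardy inequality to the literature, while your route reproduces its proof; if you were to include it, the truncation argument showing that the inequality forces \eqref{4.3} even when $\int_0^r v^{-p'}=\infty$ is the one place that should be written out rather than merely asserted.
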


\begin{lemma}\label{l4.3}
Let $\boz\subset\rn$ be a bounded domain and
$w,\,v$ two non-negative Borel functions on $(0,|\boz|]$.
Assume that $p_0\in(0,1]$ and $p_1\in[p_0,\fz)$.

{\rm(i)} The inequality
\begin{align*}
\lf\{\int_0^{|\boz|}\lf[s^{-\frac1{n'}}\int_0^s
f^\ast(t)\,dt\r]^{p_1}w(s)\,ds\r\}^{\frac1{p_1}}\le C\lf\{\int_0^{|\boz|} [f^\ast(s)]^{p_0}v(s)\,ds\r\}^{\frac1{p_0}}
\end{align*}
holds true for any Borel function $f$ on $\boz$, where $C$ is a positive
constant independent of $f$, if and only if
\begin{align}\label{4.5}
\sup_{t\in(0,|\boz|]}\lf\{\int_0^ts^{\frac{p_1}n}w(s)\,ds
+t^{p_1}\int_t^{|\boz|}w(s)s^{-\frac{p_1}{n'}}\,ds\r\}^{\frac1{p_1}}
\lf\{\int_0^tv(s)\,ds\r\}^{-\frac1{p_0}}<\fz.
\end{align}

{\rm(ii)} The inequality
\begin{align*}
\lf\{\int_0^{|\boz|}\lf[\int_s^{|\boz|} f^\ast(t)t^{-\frac1{n'}}\,dt\r]^{p_1}w(s)\,ds\r\}^{\frac1{p_1}}\le C\lf\{\int_0^{|\boz|} [f^\ast(s)]^{p_0}v(s)\,ds\r\}^{\frac1{p_0}}
\end{align*}
holds true for any Borel function $f$ on $\boz$, where $C$ is a positive
constant independent of $f$, if and only if
\begin{align}\label{4.6}
\sup_{t\in(0,|\boz|]}\lf\{\int_0^t(t-s)^{\frac{p_1}n}w(s)\,ds\r\}^{\frac1{p_1}}
\lf\{\int_0^tv(s)\,ds\r\}^{-\frac1{p_0}}<\fz.
\end{align}

\end{lemma}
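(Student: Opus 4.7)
My plan is to prove the two inequalities as instances of weighted Hardy--type estimates restricted to the cone of non--increasing functions (the decreasing rearrangements). Each part will be split into a \emph{necessity} direction and a \emph{sufficiency} direction, with necessity handled by inserting sharp test functions and sufficiency handled either by reduction to the classical weighted Hardy inequalities of Lemma \ref{l4.2} (when $p_0=1$), or by a Sawyer--type duality argument on monotone functions (when $p_0<1$).

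For necessity in part (i), I would substitute $f^\ast(r)=\chi_{(0,t)}(r)$ with $t\in(0,|\boz|]$ into the stated inequality. Then $\int_0^s f^\ast(r)\,dr=\min(s,t)$, so the left--hand side splits into $\int_0^t s^{p_1/n}w(s)\,ds+t^{p_1}\int_t^{|\boz|}s^{-p_1/n'}w(s)\,ds$, while the right--hand side is $\{\int_0^t v(s)\,ds\}^{1/p_0}$; taking $\sup_{t}$ gives exactly \eqref{4.5}. For necessity in part (ii) the characteristic function $\chi_{(0,t)}$ only yields the weaker factor $(t^{1/n}-s^{1/n})^{p_1}$ rather than $(t-s)^{p_1/n}$, so I would use the layer--cake representation $f^\ast(r)=\int_0^\infty\chi_{(0,\mu_f(\lambda))}(r)\,d\lambda$ together with an extremal (near--optimal) superposition to recover the full condition \eqref{4.6}.

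For sufficiency when $p_0=1$, the right--hand side is linear in $f^\ast$ and Fubini's theorem converts both inequalities into classical weighted Hardy inequalities on $(0,|\boz|]$ for the single integrals $\int_0^s f^\ast\,dt$ or $\int_s^{|\boz|}f^\ast(t)t^{-1/n'}\,dt$. After absorbing the factors $s^{-1/n'}$ in (i) and $t^{-1/n'}$ in (ii) into the weight $w$, conditions \eqref{4.5} and \eqref{4.6} match the Muckenhoupt--type conditions \eqref{4.3} and \eqref{4.4} of Lemma \ref{l4.2}, whence the estimate follows. For $p_0\in(0,1)$ the right--hand side $\int_0^{|\boz|}(f^\ast)^{p_0}v\,ds$ is not a norm and Hölder/duality cannot be applied directly; I would instead invoke the Sawyer duality principle on cones of monotone functions, which represents $(\int_0^{|\boz|}(f^\ast)^{p_0}v\,ds)^{1/p_0}$ via a supremum over auxiliary non--negative functions, reducing the inequality once again to the classical two--weight Hardy setting covered by Lemma \ref{l4.2}.

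The main obstacle will be the case $p_0<1$ in part (ii). Here the classical duality fails and the specific form of the weight $(t-s)^{p_1/n}$ in \eqref{4.6} does not arise naturally from elementary test functions; it must be produced by carefully selecting the "level function" or least concave majorant in the Sawyer framework so that the power weight $s^{-1/n'}$ is correctly absorbed. The proof of Carro--Soria \cite{cs93} provides the general duality machinery, and my strategy would amount to specializing that machinery to the operators $f^\ast\mapsto s^{-1/n'}\int_0^s f^\ast\,dt$ and $f^\ast\mapsto\int_s^{|\boz|}f^\ast(t)t^{-1/n'}\,dt$ appearing above.
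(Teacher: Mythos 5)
Your necessity argument for part (i) (testing with $f^\ast=\chi_{(0,t)}$, so that $\int_0^s f^\ast=\min(s,t)$ and the left-hand side becomes exactly the bracket in \eqref{4.5}) is correct, and your closing remark is the part closest to the paper: the paper does not prove this lemma at all, it simply quotes it from \cite[Proposition 2.9]{cs93}. As an actual proof sketch, however, the proposal has concrete gaps. First, the sufficiency route through Lemma \ref{l4.2} when $p_0=1$ does not work: with $p=1$ the conditions \eqref{4.3} and \eqref{4.4} involve $\esup_{s\in(0,r)}[v(s)]^{-1}$, whereas \eqref{4.5} and \eqref{4.6} involve $\{\int_0^t v(s)\,ds\}^{-1}$; the latter are strictly weaker, precisely because the inequality is only asserted on the cone of decreasing functions, so it cannot be deduced from the unrestricted two-weight Hardy inequality of Lemma \ref{l4.2}. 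The correct mechanism, for every $p_0\in(0,1]$, is the reduction to characteristic functions on the cone of decreasing functions (write $f^\ast=\int_0^\infty\chi_{(0,\mu_f(\lambda))}\,d\lambda$ and exploit $p_0\le1$); this is the content of the Carro--Soria theorem, not of Sawyer's duality principle, which is a $p>1$ tool: for $p_0<1$ the functional $\{\int (f^\ast)^{p_0}v\}^{1/p_0}$ is not convex and admits no representation as a supremum of linear functionals. Note also that in the intended application (Theorem \ref{t1.2}(iv)) one has $p_0=p_1=k<1$, so even Minkowski's integral inequality in $L^{p_1}(w)$ is unavailable and the general argument of \cite{cs93} is really needed.

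Second, the step you flag as the ``main obstacle'' is not merely hard, it is insurmountable as stated: testing (ii) on $\chi_{(0,t)}$ produces the weight $(t^{1/n}-s^{1/n})^{p_1}$, and no superposition of decreasing test functions upgrades this to $(t-s)^{p_1/n}$, because \eqref{4.6} is genuinely stronger than the characteristic-function condition (the two kernels are comparable only for $s\le t/2$, while $t^{1/n}-s^{1/n}\ll(t-s)^{1/n}$ as $s\to t$). Concretely, take $|\Omega|=1$, let $w$ consist of narrow bumps of mass $2^{jp_1/n}$ centered at $s_j=1-2^{-j}$, $j\ge1$, and let $v:=4\chi_{(0,1/4)}$. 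For every decreasing $g$ one has $\int_{s_j}^1 g(t)t^{-1/n'}\,dt\lesssim g(1/2)\,2^{-j}$, hence the left-hand side of (ii) is at most a constant times $g(1/2)\{\sum_j 2^{-jp_1(1-1/n)}\}^{1/p_1}<\fz$, while $\{\int_0^1 g^{p_0}v\}^{1/p_0}\ge g(1/2)$; so the inequality in (ii) holds with a finite constant, yet $\int_0^1(1-s)^{p_1/n}w(s)\,ds=\sum_j 2^{-jp_1/n}2^{jp_1/n}=\fz$ and \eqref{4.6} fails. Thus the ``only if'' half of (ii) cannot be proved as printed; the correct necessary and sufficient condition carries $(t^{1/n}-s^{1/n})^{p_1}$ in place of $(t-s)^{p_1/n}$, and \eqref{4.6} as written is only sufficient (since $t^{1/n}-s^{1/n}\le(t-s)^{1/n}$). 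This is harmless for the paper, which only uses the sufficiency direction in the proof of Theorem \ref{t1.2}, but your plan to ``recover the full condition \eqref{4.6}'' by an extremal layer-cake superposition would fail.
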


Now we give out the proof of Theorem \ref{t1.2} by using Theorem \ref{t1.1},
Lemmas \ref{l4.1}, \ref{l4.2} and \ref{l4.3}.

\begin{proof}[Proof of Theorem \ref{t1.2}]
(i) By Theorem \ref{t1.1} and the H\"older inequality, we find that, for any $p\in[1,n/(n-1))$,
\begin{align*}
\|\nabla u\|_{L^p(\boz)}=\||\nabla u|^\ast\|_{L^p(0,|\boz|)}\ls&
\lf\{\int_0^{|\boz|}s^{-\frac{p}{n'}}\lf[\int_0^{s}f^\ast(t)\,dt\r]^p\,ds\r\}^{\frac1p}\\
&\hs+\lf\{\int_0^{|\boz|}\lf[\int_{s}^{|\boz|}f^\ast(t)t^{-\frac1{n'}}\,dt\r]^p\,ds\r\}^{\frac1p}\\
\ls& \|f\|_{L^1(\boz)}\lf[\int_0^{|\boz|}s^{-\frac{p}{n'}}\,ds\r]^{\frac1p}\sim\|f\|_{L^1(\boz)}.
\end{align*}
Thus, (i) holds true.

(ii) Let $k\in(0,1]$. Then, from Theorem \ref{t1.1} and Remark \ref{r1.5}(ii), it follows that
\begin{align*}
\|\nabla u\|_{L^{\frac{n}{n-1},\fz}(\boz)}=& \sup_{s\in(0,|\boz|)}\lf\{s^{\frac{1}{n'}}|\nabla u|^\ast(s)\r\}\\
\ls& \sup_{s\in(0,|\boz|)}\lf\{\int_0^sf^\ast(t)\,dt+s^{-\frac1{n'}}
\int_s^{|\boz|}f^\ast(t)t^{-\frac1{n'}}\,dt\r\}\\
\ls& \|f\|_{L^1(\boz)}\ls\|f\|_{L^{1,k}(\boz)},
\end{align*}
which completes the proof of (ii).

(iii) Let $q\in(1,n)$. Then
$$\sup_{t\in(0,|\boz|]}\lf\{\int_t^{|\boz|}\lf[s^{-\frac1{n'}}\r]^{\frac{nq}{n-q}}\,ds\r\}^{\frac{n-q}{nq}}
\lf\{\int_0^t\,ds\r\}^{\frac1{q'}}
\ls\sup_{t\in(0,|\boz|]}\lf\{t^{-\frac{q-1}{q}}t^{\frac{1}{q'}}\r\}\ls1,
$$
which, together with Lemma \ref{l4.2}(i), implies that
\begin{align}\label{4.7}
\lf\{\int_0^{|\boz|}\lf[s^{-\frac1{n'}}\int_0^s f^\ast(t)\,dt\r]^{\frac{nq}{n-q}}\,ds\r\}^{\frac{n-q}{nq}}&\ls
\lf\{\int_0^{|\boz|}[f^\ast(t)]^{q}\,dt\r\}^{\frac1q}.
\end{align}
Moreover,
\begin{align*}
\sup_{t\in(0,|\boz|]}\lf\{\int_0^t\,ds\r\}^{\frac{n-q}{nq}}\lf\{\int_t^{|\boz|}\lf[s^{\frac{1}{n'}}\r]^{-q'}
\,ds\r\}^{\frac1{q'}}\ls
\sup_{t\in(0,|\boz|]}\lf\{t^{\frac{n-q}{nq}}t^{\frac{q-n}{n(q-1)}\frac{q-1}{q}}\r\}\ls1,
\end{align*}
which, combined with Lemma \ref{l4.2}(ii), further implies that
\begin{align*}
\lf\{\int_0^{|\boz|}\lf[\int_s^{|\boz|} f^\ast(t)t^{-\frac1{n'}}\,dt\r]^{\frac{nq}{n-q}}\,ds\r\}^{\frac{n-q}{nq}}\ls
\lf\{\int_0^{|\boz|}\lf[f^\ast(t)t^{-\frac1{n'}}\r]^{q}t^{\frac q{n'}}\,dt\r\}^{\frac1q}\sim\|f^\ast\|_{L^q(0,|\boz|)}.
\end{align*}
By this, \eqref{4.7} and Lemma \ref{l4.1}, we conclude that
\begin{align}\label{4.8}
\|\nabla u\|_{L^{\frac{nq}{n-q}}(\boz)}=\||\nabla u|^\ast\|_{L^{\frac{nq}{n-q}}(0,|\boz|)}\ls
\|f^\ast\|_{L^q(0,|\boz|)}\sim\|f\|_{L^q(\boz)}.
\end{align}
This finishes the proof of (iii).

(iv) Let $q\in(1,n)$ and $k\in(0,\fz]$.
When $k\in(0,1]$, it is easy to see that
\begin{align*}
\sup_{t\in(0,|\boz|]}\lf\{\int_0^t s^{\frac{k}{n}}s^{(\frac{1}{q}-\frac 1n)k-1}\,ds
+t^k\int_t^{|\boz|}s^{\frac kq-k-1}\,ds\r\}^{\frac1k}
\lf\{\int_0^ts^{\frac{k}{q}-1}\,ds\r\}^{-\frac1k}\ls1
\end{align*}
and
\begin{align*}
&\sup_{t\in(0,|\boz|]}\lf\{\int_0^t(t-s)^{\frac kn}s^{(\frac1q-\frac1n)k-1}\,ds\r\}^{\frac1k}
\lf\{\int_0^ts^{\frac{k}{q}-1}\,ds\r\}^{-\frac1k}\\
&\hs\ls\sup_{t\in(0,|\boz|]}\lf\{t^{\frac kn}\int_0^ts^{(\frac1q-\frac1n)k-1}\,ds\r\}^{\frac1k}
\lf\{\int_0^ts^{\frac{k}{q}-1}\,ds\r\}^{-\frac1k}\ls1,
\end{align*}
which imply that \eqref{4.5} and \eqref{4.6} hold true for $p_0=k=p_1$,
$w(s):=s^{(1/q-1/n)k-1}$ and $v(s):=s^{k/q-1}$ with $s\in(0,|\boz|]$. From this and
Lemma \ref{l4.3}, it follows that
\begin{align*}
\lf\{\int_0^{|\boz|}\lf[s^{-\frac1{n'}}\int_0^sf^\ast(t)\,dt\r]^k
s^{(\frac1q-\frac1n)k-1}\,ds\r\}^{\frac1k}&\ls
\lf\{\int_0^{|\boz|}\lf[f^\ast(s)\r]^ks^{\frac kq-1}\,ds\r\}^{\frac1k}\\
&\sim\lf\|(\cdot)^{\frac1q-\frac1k}f^\ast(\cdot)\r\|_{L^k(0,|\boz|)}
\end{align*}
and
\begin{align*}
\lf\{\int_0^{|\boz|}\lf[\int_{s}^{|\boz|}f^\ast(t)t^{-\frac1{n'}}\,dt\r]^k
s^{(\frac1q-\frac1n)k-1}\,ds\r\}^{\frac1k}
\ls\lf\|(\cdot)^{\frac1q-\frac1k}f^\ast(\cdot)\r\|_{L^k(0,|\boz|)},
\end{align*}
which, together with Lemma \ref{l4.1}, further implies that
\begin{align}\label{4.9}
\|\nabla u\|_{L^{\frac{nq}{n-q},k}(\boz)}&=\lf\|(\cdot)^{\frac{n-q}{nq}-\frac1k}
|\nabla u|^\ast(\cdot)\r\|_{L^k(0,|\boz|)}\\ \nonumber
&\ls\lf\|(\cdot)^{\frac1q-\frac1k}f^\ast(\cdot)\r\|_{L^k(0,|\boz|)}\sim\|f\|_{L^{q,k}(\boz)}.
\end{align}
Thus, in the case that $k\in(0,1]$, the conclusion of (iv) holds true.

Moreover, when $k\in(1,\fz]$, replacing Lemma \ref{l4.3} by Lemma \ref{l4.2} and
repeating the proof of \eqref{4.9}, we conclude that the conclusion of (iv)
holds true in the case that $k\in(1,\fz]$. This finishes the proof of (iv).

(v) It is easy to see that, for any $p\in[1,\fz)$,
$$\sup_{t\in(0,|\boz|]}\lf\{\int_t^{|\boz|}s^{-\frac{p}{n'}}\,ds\r\}^{\frac1p}
\lf\{\int_0^t\,ds\r\}^{\frac1{n'}}
\ls1
$$
and
\begin{align*}
\sup_{t\in(0,|\boz|]}\lf\{\int_0^t\,ds\r\}^{\frac1p}\lf\{\int_t^{|\boz|}
\lf[s^{\frac{1}{n'}}\r]^{-n'}\,ds\r\}^{\frac1{n'}}\ls
\sup_{t\in(0,|\boz|]}\lf\{t^{\frac1p}\lf[\log\lf(\frac{|\boz|}{t}\r)\r]^{\frac1{n'}}\r\}\ls1.
\end{align*}
By this and similar to the proof of (iii), we know that (v) holds true.

(vi) We first assume that $k\in(1,\fz)$. Then we know that
\begin{align}\label{4.10}
&\sup_{t\in(0,|\boz|]}\lf[\int_t^{|\boz|}\lf\{s^{-(\frac1k+\frac{1}{n'})}\lf[1+\log\lf(\frac{|\boz|}{s}\r)\r]^{-1}
\r\}^k\,ds\r]^{\frac1k}
\lf\{\int_0^t\lf[s^{\frac{1}{n}-\frac1k}\r]^{-k'}\,ds\r\}^{\frac{1}{k'}}\\ \nonumber
&\hs\ls\sup_{t\in(0,|\boz|]}\lf\{\lf[\int_t^{|\boz|}s^{-1-\frac{k}{n'}}\,ds\r]^{\frac1k}
t^{\frac1{n'}}\r\}\ls1.
\end{align}
Furthermore, by the change of variables, we find that, for any $t\in(0,|\boz|)$,
\begin{align*}
\int_0^ts^{-1}\lf[1+\log\lf(\frac{|\boz|}{s}\r)\r]^{-k}\,ds
&=\int_{|\boz|/t}^\fz s^{-1}(1+\log s)^{-k}\,ds\le\sum_{j=1}^\fz\lf[j+\log\lf(\frac{|\boz|}t\r)\r]^{-k}\\
&\le\sum_{j=1}^\fz\int_{j-1}^j\lf[s+\log\lf(\frac{|\boz|}t\r)\r]^{-k}\,ds\\
&=\int_{0}^\fz\lf[s+\log\lf(\frac{|\boz|}t\r)\r]^{-k}\,ds=\frac{1}{k-1}
\lf[\log\lf(\frac{|\boz|}{t}\r)\r]^{1-k},
\end{align*}
which further implies that
\begin{align*}
\sup_{t\in(0,|\boz|]}\lf[\int_0^t\lf\{s^{-\frac1k}
\lf[1+\log\lf(\frac{|\boz|}{s}\r)\r]^{-1}\r\}^k\,ds\r]^{\frac1k}
\lf\{\int_t^{|\boz|}\lf[s^{\frac{1}{k'}}\r]^{-k'}\,ds\r\}^{\frac{1}{k'}}
\ls1.
\end{align*}
From this and \eqref{4.10}, we deduce that \eqref{4.3}
(taking $p=k=q$,
$$w(s):=s^{-(\frac1k+\frac{1}{n'})}\lf[1+\log\lf(\frac{|\boz|}{s}\r)\r]^{-1}$$
and $v(s):=s^{1/n-1/k}$ with $s\in(0,|\boz|]$ in \eqref{4.3}) and \eqref{4.4}
(taking $p=k=q$,
$$w(s):=s^{-\frac1k}\lf[1+\log\lf(\frac{|\boz|}s\r)\r]^{-1}$$
and $v(s):=s^{1/k'}$ with $s\in(0,|\boz|]$ in \eqref{4.4})
hold true, which, together with Lemma \ref{l4.2}, implies that
\begin{align*}
&\lf[\int_0^{|\boz|}\lf\{s^{-(\frac1k+\frac{1}{n'})}\lf[1+\log\lf(\frac{|\boz|}{s}\r)\r]^{-1}
\int_0^sf^\ast(t)\,dt\r\}^k
\,ds\r]^{\frac1k}\\
&\hs\ls\lf\{\int_0^{|\boz|}\lf[f^\ast(s)\r]^ks^{\frac kn-1}\,ds\r\}^{\frac1k}\sim
\lf\|(\cdot)^{\frac1n-\frac1k}f^\ast(\cdot)\r\|_{L^k(0,|\boz|)}
\end{align*}
and
\begin{align*}
&\lf[\int_0^{|\boz|}\lf\{s^{-\frac1k}\lf[1+\log\lf(\frac{|\boz|}{s}\r)\r]^{-1}
\int_{s}^{|\boz|}f^\ast(t)t^{-\frac1{n'}}\,dt\r\}^k\,ds\r]^{\frac1k}\\ \nonumber
&\hs\ls\lf\{\int_0^{|\boz|}\lf[f^\ast(s)\r]^ks^{\frac kn-1}\,ds\r\}^{\frac1k}
\sim\lf\|(\cdot)^{\frac1n-\frac1k}f^\ast(\cdot)\r\|_{L^k(0,|\boz|)}.
\end{align*}
By this and Lemma \ref{l4.1}, we conclude that
\begin{align*}
\|\nabla u\|_{L^{\fz,k}(\log L)^{-1}(\boz)}
\ls\lf\|(\cdot)^{\frac1n-\frac1k}f^\ast(\cdot)\r\|_{L^k(0,|\boz|)}\sim\|f\|_{L^{n,k}(\boz)}.
\end{align*}
Thus, (vi) holds true in the case that $k\in(1,\fz)$.

Moreover, when $k=\fz$, it is easy to see that
\begin{align*}
&\sup_{s\in(0,|\boz|)}\lf[1+\log\lf(\frac{|\boz|}{s}\r)\r]^{-1}
\lf[s^{-\frac1{n'}}\int_{0}^s f^\ast(r)\,dr+\int_s^{|\boz|}f^\ast(r)r^{-\frac{1}{n'}}\,dr\r]\\
&\hs\ls\|f\|_{L^{n,\fz}(\boz)}\sup_{s\in(0,|\boz|)}\lf[1+\log\lf(\frac{|\boz|}{s}\r)\r]^{-1}
\lf[s^{-\frac{1}{n'}}\int_0^s r^{-\frac1n}\,dr+\int_s^{|\boz|}r^{-1}\,dr\r]\\
&\hs\ls\|f\|_{L^{n,\fz}(\boz)},
\end{align*}
which implies that $\|\nabla u\|_{L^{\fz,\fz}(\log L)^{-1}(\boz)}\ls\|f\|_{L^{n,\fz}(\boz)}$.
This finishes the proof of the case that $k=\fz$ and hence (vi).

(vii) Let $q\in(n,\fz]$. Then, from Proposition \ref{p3.1} and Remark \ref{r1.5}(iii), it follows that
$$\|\nabla u\|_{L^\fz(\boz)}\ls\|f\|_{L^{n,1}(\boz)}\ls\|f\|_{L^q(\boz)},
$$
which completes the proof of (vii).

(viii) By (ii) and (iii) of Remark \ref{r1.5}, we find that $L^{q,k}(\boz)\subset L^{n,1}(\boz)$
when $q=n$ and $k\in(0,1]$ or $q\in(n,\fz]$ and $k\in(0,\fz]$, which, together with Proposition
\ref{p3.1}, implies that (viii) holds true.
This finishes the proof of Theorem \ref{t1.2}.
\end{proof}

\smallskip

{\bf Acknowledgement.} The authors would like to thank Professor Renjin Jiang
for some helpful discussions on this topic and, indeed, the examples given in Remarks \ref{r1.2}(i)
and \ref{r1.3}(i) are attributed to him. The first author would also like to
thank Professor Jun Geng for some helpful discussions on this topic.

\bigskip

\noindent Sibei Yang

\medskip

\noindent School of Mathematics and Statistics, Gansu Key Laboratory of Applied Mathematics and
Complex Systems, Lanzhou University, Lanzhou, Gansu 730000, People's Republic of China

\smallskip

\noindent{\it E-mail:} \texttt{yangsb@lzu.edu.cn}

\bigskip

\noindent Der-Chen Chang

\medskip

\noindent Department of Mathematics and Department of Computer Science,
Georgetown University, Washington D.C. 20057, USA

\noindent Department of Mathematics, Fu Jen Catholic University, Taipei 242, Taiwan
\smallskip

\noindent{\it E-mail:} \texttt{chang@georgetown.edu}

\bigskip

\noindent Dachun Yang (Corresponding author)

\medskip

\noindent School of Mathematical Sciences, Beijing Normal
University, Laboratory of Mathematics and Complex Systems, Ministry
of Education, Beijing 100875, People's Republic of China

\smallskip

\noindent{\it E-mail:} \texttt{dcyang@bnu.edu.cn}

\bigskip

\noindent Zunwei Fu

\medskip

\noindent Department of Mathematics, Linyi University, Linyi 276005, People's Republic of China

\smallskip

\noindent{\it E-mail:} \texttt{zwfu@mail.bnu.edu.cn}

\end{document}